\newtheorem{thm}{Theorem}[section]
\newtheorem{lem}[thm]{Lemma}
\theoremstyle{definition}
\theoremstyle{remark}
\numberwithin{equation}{section}
\theoremstyle{remark}
\newcommand{\mbb}{\mathbb}
\newcommand{\sm}{\setminus}
\newcommand{\no}{\noindent}
\begin{document}
\title{Relation between H\'{e}non maps with biholomorphic escaping sets} 
\keywords{H\'{e}non maps, escaping sets, B\"{o}ttcher function}
\subjclass{Primary: 32H02  ; Secondary 32H50}
\author{Ratna Pal}

\address{Indian Institute of Science Education and Research Mohali, Knowledge City, Sector -81, Mohali, Punjab-140306, India}
\email{ratna.math@gmail.com, ratnapal@iisermohali.ac.in}

\begin{abstract}
Let $H$ and $F$ be  two H\'{e}non maps with biholomorphically equivalent escaping sets, then there exist affine automorphisms $A_1$ and $A_2$ in $\mathbb{C}^2$ such that 
\[
F=A_1\circ H \circ A_2
\] 
in $\mathbb{C}^2$.
\end{abstract}

\maketitle

\section{Introduction}
In the complex plane the simplest examples of holomorphic dynamical systems with non-trivial dynamical behaviour are the polynomial maps of degree greater than or equal to $2$.  The linear polynomials, in other words, the automorphisms of the complex plane, generate trivial dynamics. In contrast,  the class of polynomial automorphisms of $\mathbb{C}^2$ is large and possesses rich dynamical features. A dynamical classification of these maps was given by Friedland--Milnor \cite{FM}. They showed that any polynomial automorphism of $\mathbb{C}^2$ is conjugate to one of the following maps: 

$\bullet$ an {\it affine map};

$\bullet$ an {\it elementary map}, i.e., the maps of the form $(x,y)\mapsto (ax+b, sy+p(x))$ with $as\neq 0$, where $p$ is a polynomial in single variable of degree strictly greater than one;

$\bullet$ a finite composition of H\'{e}non maps, where H\'{e}non maps are the maps of the form
\begin{equation}\label{S Henon}
(x,y)\mapsto (y, p(y)-\delta x)
\end{equation}
 with $\delta\neq 0$ and $p$ a polynomial in single variable of degree $d\geq 2$. 
 
 The degree of a single H\'{e}non map $H$ of the form (\ref{S Henon}) is defined to be the degree of the polynomial $p$.
The degree of composition of H\'{e}non maps $H_n\circ \cdots \circ H_1$ is defined to be $d_n\cdots d_1$ where $d_i=\deg(H_i)$, for $1\leq i \leq n$.

H\'{e}non maps are generalization of classical real quadratic H\'{e}non maps introduced by astronomer Michel H\'{e}non. The relevance of these maps in complex dynamics became apparent in the above-mentioned classification theorem of Friedland--Milnor.  Moreover it turned out that the composition of H\'{e}non maps are the only dynamically non-trivial polynomial automorphisms in $\mathbb{C}^2$, which naturally drew attention of many foremost researchers towards these maps. 
The pioneering work on H\'{e}non maps was done  by  Bedford--Smillie (\cite{BS1}, \cite{BS2}, \cite{BS3}), Forn\ae ss--Sibony (\cite{FS}) and Hubbard--Oberste-Vorth (\cite{HO}, \cite{HO2}).

As in the case of polynomials in the complex plane, the orbit of any point in $\mathbb{C}^2$ under the iterations of a H\'{e}non map $H$ (or more generally, finite composition of H\'{e}non maps) either diverges to infinity or always remains bounded. The collection of points $I_H^+ \subseteq \mathbb{C}^2$ which escape to infinity are called the {\it{escaping set}} of $H$ and the collection of points $K_H^+\subseteq \mathbb{C}^2$ whose orbits remain bounded are called  the {\it{non-escaping set}} of $H$. The union of escaping set $I_H^+$ and the interior of non-escaping set $K_H^+$ is the largest set of normality of the sequence of maps $\{H^n\}_{n\geq 1}$, that is, $I_H^+ \cup {\rm{int}}K_H^+$ is the Fatou set of $H$. The escaping sets and the interiors of non-escaping sets of H\'{e}non maps can be thought of as analogues of the unbounded components and the bounded components of Fatou sets of polynomials in the complex plane. However, the non-escaping sets of H\'{e}non maps are not bounded in $\mathbb{C}^2$. The common boundary set $J_H^+$ of the escaping set $I_H^+$ and the non-escaping set $K_H^+$ is the Julia set of $H$. 

The present article addresses a {\it rigidity} property of H\'{e}non maps of the form (\ref{S Henon}).
{\it To what extent do the escaping sets of H\'{e}non maps determine the H\'{e}non maps? }In other words,  if the escaping sets of a pair of H\'{e}non maps $H$ and $F$ of degree $d$ are biholomorphically equivalent, then are these two H\'{e}non maps closely related? 

This question is first studied in a recent work of Bonnot--Radu--Tanase (\cite{BRT}), where they prove that $H$ and $F$ coincide, for $d=2$. Further, they produce examples to show that for $d\geq 3$, $H$ and $F$ might not be even conjugate to each other. In this article we establish a precise relation between $H$ and $F$ of any degree $d\geq 2$ with biholomorphic escaping sets (Theorem \ref{main'}).

The rigidity question raised here is conceived based on an explicit description of analytic structure of the escaping sets given by Hubbard--Oberste-Vorth in \cite{HO}. A convenient description of the escaping set $I_H^+$ of a given H\'{e}non map $H$ is given in terms of logarithmic rate of escape function, the so-called Green's function $G_H^+$  of the H\'{e}non map $H$. One can show that $I_H^+$ is precisely where the Green's function is strictly positive. Further, $G_H^+: I_H^+ \rightarrow \mathbb{R}_+$ is a pluri-harmonic submersion and the level sets of $G_H^+$ are three dimensional manifolds, which are naturally foliated by copies of  $\mathbb{C}$. The Green's function $G_H^+$ is inextricably related to the B\"{o}ttcher function $\phi_H^+$ of $H$, which is one of the key ingredients in describing the analytic structure of $I_H^+$.  The B\"{o}ttcher functions of H\'{e}non maps can be considered as analogues of B\"{o}ttcher functions of polynomials in $\mathbb{C}$ near infinity and they are defined in appropriate neighbourhoods of a point at infinity $[0:1:0]$ in $\mathbb{P}^2$.  To be a bit more precise, for $R>0$ sufficiently large, $\phi_H^+$ is defined on the open set $V_R^+=\left\{(x,y)\in \mathbb{C}^2: \lvert y\rvert> \max\left \{\lvert x\rvert, R\right\}\right \} \subseteq I_H^+$ by means of approaching the point $[0:1:0]$ by the n-fold iteration $H^n$, then returning back by appropriate $d^n$-th root of the mapping $y\mapsto y^{d^n}$ and finally taking the limit as $n\rightarrow \infty$.  Consequently, the range of $\phi_H^+$ lies inside $\mathbb{C}\setminus \bar{\mathbb{D}}$, where $\mathbb{D}$ is the unit disc in $\mathbb{C}$ and $G_H^+\equiv \log \left\lvert \phi_H^+\right\rvert$ in $V_R^+$.
Although the B\"{o}ttcher function $\phi_H^+$ does not extend analytically to $I_H^+$, it extends along curves in $I_H^+$ starting in $V_R^+$ and defines a multi-valued analytic map in $I_H^+$. Let $\tilde{I}_H^+$ be the covering of $I_H^+$ obtained as the Riemann domain of $\phi_H^+$ and let $\phi_H^+$  lifts as a single-valued holomorphic function $\tilde{\phi}_H^+: \tilde{I}_H^+ \rightarrow \mathbb{C}\setminus \bar{\mathbb{D}}$. Further, $\tilde{I}_H^+$ is biholomorphically equivalent to the domain $\{(z,\zeta): z\in \mathbb{C}, \lvert \zeta \rvert>1\}$. By construction $\zeta=\tilde{\phi}_H^+$ and thus in this new coordinate the level sets of $\tilde{\phi}_H^+$ simply straightens out.  
The H\'{e}non map $H$ lifts as a map $\tilde{H}: \mathbb{C}\times (\mathbb{C}\setminus \bar{\mathbb{D}})\rightarrow  \mathbb{C}\times (\mathbb{C}\setminus \bar{\mathbb{D}})$ and one can write down $\tilde{H}$ explicitly (see (\ref{H tilde})). 

The following result relies on two main ingredients, also used by Bonnot--Radu--Tanase in \cite{BRT}: The above-mentioned  explicit description of the covering $\tilde{I}_H^+$ of $I_H^+$ and a method given by Bousch in \cite{bousch}.

\begin{thm}\label{main}
Let $H(x,y)=(y,p_H(y)-\delta_H x)$ and $F(x,y)=(y,p_F(y)-\delta_F x)$ be  a pair of H\'{e}non maps, where $p_H(y)=y^d+\sum_{i=0}^{d-2} a_{i}^H y^i$ and $p_F(y)=y^d+\sum_{i=0}^{d-2} a_{i}^F y^i$. Let $I_H^+$ and $I_F^+$ be escaping sets of $H$ and $F$, respectively and let $I_H^+$ and $I_F^+$ be biholomorphically equivalent. Then $\beta p_H(y)=\alpha p_F(\alpha y)$, for some $\alpha, \beta \in \mathbb{C}$ with $\alpha^{d+1}=\beta$ and $\beta^{d-1}=1$. Further, we have $\delta_H=\gamma \delta_F$, with $\gamma^{d-1}=1$. Therefore, 
\begin{equation}
F\equiv L \circ B \circ H \circ B,
\end{equation}
where $B(x,y)=(\gamma \alpha \beta^{-1}x, \alpha^{-1}y)$ and $L(x,y)=(\gamma^{-1}\beta x,\beta y )$, for all $(x,y)\in \mathbb{C}^2$.
\end{thm}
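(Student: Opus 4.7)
The strategy is to work on the Riemann domains $\tilde{I}_H^+,\tilde{I}_F^+$ of the B\"ottcher functions, each biholomorphic to $\mathbb{C}\times(\mathbb{C}\setminus\bar{\mathbb{D}})$ by Hubbard--Oberste-Vorth so that $\tilde{\phi}^+$ becomes the second coordinate $\zeta$ and the H\'enon map lifts to the explicit $\tilde{H}$ of~(\ref{H tilde}).

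First, I would lift $\Phi:I_H^+\to I_F^+$ to a biholomorphism $\tilde{\Phi}:\tilde{I}_H^+\to\tilde{I}_F^+$. Since the Green's function $G^+=\log|\phi^+|$ is intrinsic to $I^+$ (as the pluricomplex Green's function with the appropriate asymptotic behavior at infinity, normalized by the common degree $d$), one obtains $G_F^+\circ\Phi=G_H^+$, hence $\tilde{\phi}_F^+\circ\tilde{\Phi}=\lambda\,\tilde{\phi}_H^+$ for some $\lambda\in\mathbb{C}$ with $|\lambda|=1$. The Kobayashi hyperbolicity of $\mathbb{C}\setminus\bar{\mathbb{D}}$ (it is biholomorphic to the punctured disc) makes the $\mathbb{C}$-fibers the maximal entire curves in the product, so any biholomorphism must preserve the fibration and takes the form
\[
\tilde{\Phi}(z,\zeta)=\bigl(a(\zeta)z+b(\zeta),\,\lambda\zeta\bigr),
\]
with $a,b$ holomorphic on $\mathbb{C}\setminus\bar{\mathbb{D}}$ and $a$ non-vanishing.

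The heart of the proof is to constrain $a,b,\lambda$ using the dynamics. The compatibility of $\tilde{\Phi}$ with the covering maps forces an intertwining $\tilde{F}\circ\tilde{\Phi}=\tilde{\Phi}\circ\tilde{H}$ modulo a deck transformation of $\tilde{I}_F^+$; expanding both sides as Laurent series in $\zeta$ using~(\ref{H tilde}) and matching coefficients, following Bousch's method, yields algebraic equations. Matching second coordinates gives $\lambda^{d-1}=1$. Matching Laurent coefficients of the first coordinate at the various powers of $\zeta$ forces $a$ and $b$ to be polynomials of bounded degree (in fact $a$ constant and $b$ of degree at most $1$ in $\zeta$), identifies their leading constants with roots of unity, and produces the relations $\alpha^{d+1}=\beta$ and $\beta^{d-1}=1$ together with the scaling identity $\beta\,p_H(y)=\alpha\,p_F(\alpha y)$ from matching the polynomial part of $\tilde{H}$'s first coordinate. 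Finally, matching the $z$-linear term at order $\zeta^{-(d-1)}$ (which encodes the Jacobian $\delta_H$ in $\tilde{H}$) yields $\delta_H=\gamma\,\delta_F$ with $\gamma^{d-1}=1$.

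With these algebraic relations in hand, the identity $F\equiv L\circ B\circ H\circ B$ reduces to a direct computation using the definitions of $B$ and $L$. The main obstacle is the coefficient-matching step: one must carefully separate the polynomial contribution of $p_H$ from the $z$-linear contribution of $\delta_H$ in the first component of $\tilde{H}$, exploit the monic normalization and the vanishing of the $y^{d-1}$-coefficient in $p_H,p_F$, and rigorously exclude extra Laurent terms of $a(\zeta)$ and $b(\zeta)$ --- a task that requires careful bookkeeping but no conceptually new ideas beyond Bousch's template.
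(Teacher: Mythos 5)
Your setup is broadly aligned with the paper's: lift the biholomorphism $a$ of escaping sets (after pre-composing with an iterate of $F$ to fix the induced map on $\pi_1$) to an automorphism $A$ of $\mathbb{C}\times(\mathbb{C}\setminus\bar{\mathbb{D}})$, and use Bousch's description $A(z,\zeta)=(\beta(\zeta)z+\gamma(\zeta),\alpha\zeta)$ with $\lvert\alpha\rvert=1$. The serious gap comes at what you call the heart of the proof. You assert that compatibility of $\tilde{\Phi}$ with the covering maps forces the intertwining $\tilde{F}\circ\tilde{\Phi}=\tilde{\Phi}\circ\tilde{H}$ modulo a deck transformation of $\tilde{I}_F^+$. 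Projecting that relation down by $\pi_F$ gives $F\circ a = a\circ H$ on $I_H^+$, i.e.\ $a$ would be a conjugacy between the two dynamical systems. Nothing in the hypothesis gives this; a biholomorphism of escaping sets need not intertwine $H$ and $F$. Indeed the whole point (as the paper emphasizes, citing Bonnot--Radu--Tanase) is that for $d\geq 3$ there are pairs with biholomorphic escaping sets that are not conjugate, and the conclusion itself is the non-conjugacy statement $F=A_1\circ H\circ A_2$. So your key constraint is, in general, false, and Laurent-matching based on it would prove something strictly stronger than what is true.

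The constraint that the paper actually extracts from the covering compatibility is weaker and is the correct one: $A$ must conjugate the deck transformation group of $\pi_H$ to that of $\pi_F$, sending $\gamma^H_{k/d^n}$ to $\gamma^F_{k/d^n}$. Because these deck transformations have the explicit shear form (\ref{form aut}) built out of $Q_H$ (resp.\ $Q_F$) and $\delta$, this yields, after the Bousch-style bounded-difference argument, that $\beta(\zeta)$ and $\gamma(\zeta)$ are constant, that $\alpha^{d+1}=\beta$ with $\beta^{d-1}=1$, and the coefficient relation $A_{d-k}^H=\alpha^{-(k+1)}A_{d-k}^F$ between $Q_H$ and $Q_F$. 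Passing from $Q$ to $p$ then goes through the B\"ottcher coordinate expansions $y(0,\zeta)$, $\zeta(0,y)$ of Lemma \ref{yzetaQ} and the combinatorial Claim $\mathcal{L}$, not through any intertwining. Finally, the Jacobian relation $\delta_H=\gamma\delta_F$ with $\gamma^{d-1}=1$ is not obtained by reading off a Laurent coefficient as you propose (that route again relies on the false intertwining); the paper imports $\delta_H^{d-1}=\delta_F^{d-1}$ from \cite{BRT} and reduces to the equal-Jacobian case by replacing $H,F$ with $H^{d-1},F^{d-1}$. You would need to replace the intertwining step by the deck-group conjugation step for the argument to go through.
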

Now note that $p_H$ and $p_F$ in Theorem \ref{main} are monic and centered (next to highest coefficients vanish), whereas for  an arbitrary H\'{e}non map the associated polynomial in one variable is   not necessarily monic and centered. However, it is not hard to see that up to conjugation any arbitrary H\'{e}non map is of the form as in Theorem \ref{main}. Here goes a brief justification.  Up to conjugation by an affine automorphism of $\mathbb{C}$, any polynomial in one variable is a monic and centered polynomial of the same degree.  In particular, there exists an affine automorphism $\sigma_H$ of $\mathbb{C}$ such that $\sigma_H^{-1}\circ {p}_H\circ \sigma_H=\hat{p}_H$ in $\mathbb{C}$, where $\hat{p}_H$ is monic and centered. Thus if we consider the affine automorphism $A_H(x,y)=(\sigma_H(x),\sigma_H(y))$, for $(x,y)\in \mathbb{C}^2$, then $A_H^{-1}\circ {H} \circ A_H=\hat{H}$, where $\hat{H}(x,y)=(y,\hat{p}_H(y)-{\delta}_H x)$, for all $(x,y)\in \mathbb{C}^2$ with $\deg({\hat{p}_H})=\deg(p_H)$.  Clearly,  $A_H(I_H^+)=I_{\hat{H}}^+$. Similarly, there exist an affine map $A_F$ and a H\'{e}non map $\hat{F}$ such that $A_F^{-1}\circ  F \circ A_F=\hat{F}$ and $A_F(I_F^+)=I_{\hat{F}}^+$, where  $\hat{F}(x,y)=(y,\hat{p}_F(y)-{\delta}_F x)$, for all $(x,y)\in \mathbb{C}^2$ with $\hat{p}_F$ monic and centered.  Therefore, once we prove Theorem \ref{main}, the following result is obtained immediately.

\begin{thm}\label{main'}
Let $H$ and $F$ be  two H\'{e}non maps with biholomorphically equivalent escaping sets, then there exist affine automorphisms $A_1$ and $A_2$ in $\mathbb{C}^2$ such that 
\[
F=A_1\circ H \circ A_2
\] 
in $\mathbb{C}^2$.
\end{thm}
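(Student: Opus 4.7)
The strategy is to reduce Theorem \ref{main'} directly to Theorem \ref{main} via a preliminary affine normalization of each H\'{e}non map. The paragraph between the two theorems already sketches this reduction; my plan is to formalize it and to exhibit $A_1, A_2$ explicitly in terms of the normalizing data.

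First, I would produce the normalization. Given $H(x,y) = (y, p_H(y) - \delta_H x)$ with $p_H$ of degree $d \geq 2$ but not necessarily monic or centered, a one-variable affine conjugation $\sigma_H(t) = ct + b$, with $c^{d-1}$ equal to the reciprocal of the leading coefficient of $p_H$ and $b$ chosen to kill the sub-leading coefficient, makes $\hat{p}_H := \sigma_H^{-1} \circ p_H \circ \sigma_H$ monic and centered of the same degree. Lifting to $A_H(x,y) := (\sigma_H(x), \sigma_H(y))$, a direct calculation gives
\[
\hat{H} \;:=\; A_H^{-1} \circ H \circ A_H
\]
of the form $(x,y) \mapsto (y, \hat{p}_H(y) - \delta_H x)$ (the constant term shifts, but $\hat{p}_H$ remains monic and centered), so $\hat{H}$ satisfies the hypotheses of Theorem \ref{main}. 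The same construction applied to $F$ yields $A_F$ and $\hat{F}$.

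Next, I would transport the biholomorphism across the normalizations. Since $A_H$ is an affine automorphism of $\mathbb{C}^2$ and $\hat{H}^n = A_H^{-1} \circ H^n \circ A_H$, escape to infinity under iteration is preserved, so $A_H(I_{\hat{H}}^+) = I_H^+$ and similarly $A_F(I_{\hat{F}}^+) = I_F^+$. Thus any biholomorphism $\Phi : I_H^+ \to I_F^+$ conjugates to a biholomorphism $A_F^{-1} \circ \Phi \circ A_H : I_{\hat{H}}^+ \to I_{\hat{F}}^+$. Theorem \ref{main} now applies to the normalized pair $\hat{H}, \hat{F}$ and furnishes affine automorphisms $B$ and $L$ of $\mathbb{C}^2$ satisfying $\hat{F} = L \circ B \circ \hat{H} \circ B$.

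Finally, I would assemble $A_1$ and $A_2$ by unwinding the conjugations. Using $F = A_F \circ \hat{F} \circ A_F^{-1}$ and $\hat{H} = A_H^{-1} \circ H \circ A_H$ together with the relation above,
\[
F \;=\; (A_F \circ L \circ B \circ A_H^{-1}) \circ H \circ (A_H \circ B \circ A_F^{-1}),
\]
so setting $A_1 := A_F \circ L \circ B \circ A_H^{-1}$ and $A_2 := A_H \circ B \circ A_F^{-1}$ produces the required affine automorphisms of $\mathbb{C}^2$. Since the argument reduces to bookkeeping once Theorem \ref{main} is granted, there is no substantive obstacle; the only point worth verifying is that lifting a one-variable affine conjugation to $\mathbb{C}^2$ preserves the H\'{e}non form, which it does because it affects only the constant term of the defining polynomial.
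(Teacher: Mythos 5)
Your proposal is correct and follows essentially the same route as the paper: it formalizes the reduction sketched in the paragraph between the two theorems (conjugating $H$ and $F$ by diagonal-type affine maps $A_H, A_F$ to obtain monic, centered representatives $\hat H, \hat F$, applying Theorem \ref{main}, and unwinding the conjugations). Your explicit expressions $A_1 = A_F \circ L \circ B \circ A_H^{-1}$ and $A_2 = A_H \circ B \circ A_F^{-1}$, together with the check that lifting a one-variable affine conjugation only shifts the constant term and hence preserves the monic-and-centered form, match what the paper leaves implicit.
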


\no 
\textbf{Acknowledgement:} The author would like to thank the referees for making helpful comments.

\section{Preliminaries}
\no 
Let 
\begin{equation}\label{Henon}
H(x,y)=(y,p(y)-\delta x)
\end{equation}
be a H\'{e}non map, where $p$ is a monic and centered  polynomial in single variable of degree $d\geq 2$ and $\delta\neq 0$.  In this section, we see a few fundamental definitions and a couple of known results on H\'{e}non maps pertaining to the theme of the present article.

\medskip
\no 
{{\it {Filtration}}: For $R>0$, let
\begin{align*}
V^+_R &= \{ (x,y) \in \mathbb C^2: \vert x \vert < \vert y \vert, \vert y \vert > R \},\\
V^-_R &= \{ (x,y) \in \mathbb C^2: \vert y \vert < \vert x \vert, \vert x \vert > R \},\\
V_R &= \{ (x, y) \in \mathbb C^2: \vert x \vert, \vert y \vert \le R \}.
\end{align*}
This is called a filtration.  For a given H\'{e}non map $H$,  there exists $R > 0$ sufficiently large such that
\[
H(V^+_R) \subset V^+_R, \; H(V^+_R \cup V_R) \subset V^+_R \cup V_R
\]
and
\[
H^{-1}(V^-_R) \subset V^-_R, \; H^{-1}(V^-_R \cup V_R) \subset V^-_R \cup V_R.
\]

\medskip
\no 
{\it{Escaping and non-escaping sets}}: The set 
\[
I^{+}_H =\left \{(x, y) \in \mathbb C^2 : \lVert H^n(x,y)\rVert \rightarrow \infty \text{ as } n\rightarrow \infty\right\}
\]
is called the {\it escaping set} of $H$ and the set
\[
K_H^+ = \left\{(x, y) \in \mathbb C^2 :\;\text{the sequence}\; \left\{ H^{n}(x, y) \right\}_{n\geq 1} \; \text{is bounded} \right\}
\]
is called the {\it non-escaping set} of $H$. One can prove that $K_H^+ \subset V_R \cup V^-_R$ and
\begin{equation}\label{escape}
I_H^+=\mathbb C^2 \sm K^{+}_H = \bigcup_{n=0}^{\infty} H^{- n}(V^{+}_R).
\end{equation}
Any H\'{e}non map $H$ can be extended meromorphically to $\mathbb{P}^2$ with an isolated indeterminacy point  $[0:1:0]$. In fact, one can prove that the points in $I_H^+$ under iteration of $H$ converges to the point $[0:1:0]$ uniformly (on compacts). 

\medskip
\no 
{\it{Green's function}}:  The {\it Green's function} of $H$ is defined to be
\[
G^{+}_H(x, y) := \lim_{n \rightarrow \infty} \frac{1}{d^n} \log^+ \left\Vert H^{ n}(x, y)\right \Vert,
\]
for all $(x,y)\in \mathbb{C}^2$, where $\log^+(t)=\max\{\log t,0\}$. It turns out that  $G_H^+$ is non-negative everywhere in $\mathbb{C}^2$,  plurisubharmonic in $\mbb C^2$,  pluriharmonic on $\mbb C^2 \sm K^{+}_H$ and vanishes precisely on $K^{+}_H$. Further, 
\[
G^{+}_H \circ H = d G^{+}_H
\]
in $\mathbb{C}^2$. The Green's function $G^{+}_H$ has logarithmic growth near infinity, i.e., there exist $R>0$ and $L>0$ such that   
\begin{equation*}
\log^+ \lvert y \rvert-L\leq G_H^+ (x,y) \leq \log^+ \lvert y \rvert+L,
\end{equation*}
for $(x,y)\in \overline{V_R^+ \cup V_R}$.

\medskip
\no 
{\it{B\"{o}ttcher function}}: Let $H^n(x,y)=\left({(H^{n})}_1(x,y),{(H^{n})}_2(x,y)\right)$, for $(x,y)\in \mathbb{C}^2$. Note that $y_n={(H^{n})}_2(x,y)$ is a polynomial in $x$ and $y$ of degree $d^{n}$. The function
\[
\phi_H^+(x,y):=\lim_{n\rightarrow \infty}y_{n}^{\frac{1}{d^{n}}}=y.\frac{{y_1}^{\frac{1}{d}}}{y}.\cdots.\frac{y_{n+1}^{\frac{1}{d^{n+1}}}}{y_{n}^{\frac{1}{d^{n}}}}\cdots,
\]
for $(x,y)\in V_R^+$, defines an analytic function from $V_R^+$ to $\mathbb{C}\setminus \bar{\mathbb{D}}$ and is called the {\it B\"{o}ttcher function} of the H\'{e}non map $H$. Further, 
\[
\phi_H^+\circ H(x,y)={\left(\phi_H^+(x,y)\right)}^d,
\]
 for all $(x,y)\in V_R^+$ and
\[
\phi_H^+(x,y)\sim y \text{ as } \lVert(x,y)\rVert\rightarrow \infty  
\]
 in  $V_R^+$.
Comparing the definitions of Green's function and B\"{o}ttcher functions it follows  instantly that
\begin{equation*}\label{Green1}
G_H^+=\log \lvert\phi_H^+\rvert
\end{equation*}
in $V_R^+$.

For a detailed treatment of the above discussion, the inquisitive readers can see \cite{BS1}, \cite{BS2}, \cite{BS3}, \cite{HO} and \cite{MNTU}.

\medskip 
\no 
Now we present one of the technical ingredients required for the proof of Theorem \ref{main}. A series of change of coordinates we see here is a part of the standard theory of H\'{e}non maps. However, most of the results presented here are paraphrasing of a couple of lemmas appearing in the beginning of the appendix of \cite{BRT}. However the genesis of these results goes back to \cite{Favre} and  \cite{HO}.

For $M>0$ sufficiently large, let
\[
U_R^+=\left\{(x,y)\in V_R^+: \lvert\phi_H^+(x,y)\rvert >  M \max\{R, \lvert x \rvert \}\right\}. 
\]
One can easily check  that $U_R^+ \subseteq V_R^+$ and $H(U_R^+)\subseteq U_R^+$. 

\begin{lem}(\cite[Lemma 7.3.7] {MNTU}, \cite[Prop.\ 2.2]{Favre}) \label{lem MNTU F}
There exists a holomorphic function $\psi_H$ on $U_R^+$ such that 
\begin{itemize}
\item 
$\psi_H \circ H (x,y)=( {\delta}/{d}) \psi_H(x,y) + Q \left(\phi_H^+(x,y)\right)$,
for all $(x,y)\in U_R^+$, where $Q$ is a monic polynomial of degree $d+1$;
\item 
the map $\Phi_H=(\psi_H, \phi_H^+):U_R^+ \rightarrow \mathbb{C}^2$  is an injective holomorphic map.
\end{itemize}
\end{lem}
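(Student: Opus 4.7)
My plan is to construct $\psi_H$ by first passing to coordinates $(X,Z)=(x,\phi_H^+(x,y))$, which linearize the $\phi_H^+$-direction of $H$ (since $Z\mapsto Z^d$), and then to solve a functional equation that also straightens the transverse direction into a skew-affine map.

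The first step is to verify that on $U_R^+$ (for $R,M$ large enough) the map $(x,y)\mapsto(x,\phi_H^+(x,y))$ is a biholomorphism onto an open subset of $\mathbb{C}\times(\mathbb{C}\setminus\bar{\mathbb{D}})$. This rests on the asymptotic expansion $\phi_H^+(x,y)=y-\delta x/(d\,y^{d-1})+O(1/y)$, extracted from the defining infinite product, together with $\partial_y\phi_H^+\to 1$ at infinity. In these coordinates the H\'enon map becomes $H\colon(X,Z)\mapsto(Y(X,Z),Z^d)$, where $Y(X,Z)$ is the implicit inverse of $\phi_H^+(X,\cdot)$; a direct inversion yields $Y(X,Z)=Z+(\delta/d)\,X\,Z^{1-d}+\alpha_0(Z)+(\text{higher order})$ with $\alpha_0(Z)=O(1/Z)$.

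I would then try the ansatz $\Psi(X,Z)=ZX+B(X,Z)$, so that $\psi_H(x,y)=\phi_H^+(x,y)\cdot x+B(x,\phi_H^+(x,y))$, with $B$ a holomorphic correction decaying in $|Z|$. Substituting into the target equation $\Psi(Y(X,Z),Z^d)=(\delta/d)\Psi(X,Z)+Q(Z)$ and matching $X$-linear terms, the coefficient $A(Z)=Z$ is forced by the relation $A(Z^d)=Z^{d-1}A(Z)$, and the $X$-linear balance then produces the coefficient $\delta/d$ on the right automatically. Matching the $X$-independent terms gives an equation for $B$ whose inhomogeneous part includes $Z^d\,Y_0(Z)=Z^{d+1}+Z^d\alpha_0(Z)$; since $B$ must decay at infinity, the polynomial-in-$Z$ parts of the inhomogeneity cannot be absorbed into $B$, into $(\delta/d)B$, or into $B(\cdot,Z^d)$, and so they must sit in $Q$. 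This pins down $Q$ as a monic polynomial of degree exactly $d+1$.

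Existence of the correction $B$ is the technical heart of the proof. I would first solve for $B$ formally as a power series in $1/Z$ (and $X$), noting that $B(Y(X,Z),Z^d)$ contributes only to powers $Z^{-kd}$ while $(\delta/d)B(X,Z)$ contributes to all $Z^{-k}$; this decouples the recursion and determines the formal coefficients uniquely. Convergence of the resulting series on (the image of) $U_R^+$ would then be established either by direct coefficient estimates or by a contraction-mapping argument in a suitable Banach space of holomorphic functions decaying at infinity. Once $B$ is in hand, injectivity of $\Phi_H$ is automatic: $\phi_H^+$ separates its own level sets, and on each set $\{\phi_H^+=Z\}$ the function $\psi_H=ZX+B(X,Z)$ has $X$-derivative $Z+\partial_XB$, which is nonvanishing for $|Z|$ large since $B$ is small. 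The main obstacle will be the convergence estimate for $B$; once that is in place, the degree of $Q$ and the injectivity of $\Phi_H$ both follow readily.
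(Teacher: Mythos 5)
Your argument is correct in outline but takes a genuinely different route from the one the paper relies on. The paper does not actually prove this lemma: it cites \cite[Lemma 7.3.7]{MNTU} and \cite[Prop.~2.2]{Favre} and merely records the explicit integral formula
\[
\psi_H(x,y)=\zeta\int_0^{x}\frac{\partial y}{\partial \zeta}(u,\zeta)\,du,\qquad \zeta=\phi_H^+(x,y),
\]
with $Q(\zeta)$ the polynomial part of $\zeta^{d}\int_0^{y(0,\zeta)}\frac{\partial y}{\partial \zeta}(u,\zeta^{d})\,du$, leaving the verification of the skew-affine relation to the cited sources. You instead \emph{solve} the cohomological equation $\Psi\circ H-(\delta/d)\Psi=Q\circ\phi_H^+$ in the coordinates $(X,Z)=(x,\phi_H^+)$ via the ansatz $\Psi=ZX+B(X,Z)$, read off $Q$ as the polynomial obstruction, and construct the decaying correction $B$ by formal series plus a contraction argument. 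Both routes are legitimate; the MNTU formula has the advantage that holomorphy of $\psi_H$ is inherited at once from holomorphy of $y(x,\zeta)$ with no convergence step, whereas your derivation makes it conceptually transparent why the eigenvalue is $\delta/d$ (forced by $A(Z^d)=Z^{d-1}A(Z)$) and why $Q$ is monic of degree exactly $d+1$ (it is the polynomial part of $Z^{d}\,y(0,Z)$). Two points to tighten if you carry this out: nonvanishing of $\partial_X\Psi=Z+\partial_X B$ by itself gives only \emph{local} injectivity on each fiber $\{\phi_H^+=Z\}$, so you should state and use the quantitative bound $\lvert\partial_X B\rvert<\lvert Z\rvert$ on the fiber disk $\{\lvert X\rvert<\lvert Z\rvert/M\}$, from which global injectivity on each level set follows by the usual convexity argument; and the equation for $B$ involves $B(Y(X,Z),Z^{d})$ rather than $B(0,Z^{d})$, so isolating the polynomial part that must sit in $Q$ requires controlling $B$ along the forward orbit, where the super-exponential escape $\lvert Z\rvert\mapsto\lvert Z\rvert^{d^{n}}$ is exactly what makes the telescoping sum converge regardless of whether $\lvert d/\delta\rvert$ is greater or less than one.
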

\no
The map $(x,y) \mapsto (x,\zeta)=(x,\phi_H^+(x,y))$ maps $U_R^+$ biholomorphically to 
$\left\{(x,y)\in \mathbb{C}^2: \lvert y\rvert >  M \max\{R, \lvert x \rvert \}\right\}.$ Let $(x,\zeta) \mapsto (x, y(x,\zeta))$ be the inverse map. It turns out that 
$$
\psi_H(x,y)=\zeta \int_0^{x} \frac{\partial y}{\partial \zeta}(u,\zeta)du,
$$
for $(x,y)\in U_R^+$ and $Q(\zeta)$ is the polynomial part of the power series expansion of 
\begin{equation}\label{Q'}
\zeta^d \int_0^{y(0,\zeta)} \frac{\partial y}{\partial \zeta}(u,\zeta^d)du
\end{equation}
(see proof of \cite[Lemma 7.3.7] {MNTU} for details).


Next we consider the following change of coordinate near $p=[0:1:0]$:
\[
T:\left(x,y(x,\zeta)\right)=(x,y) \mapsto \left (\frac{x}{y}, \frac{1}{y}\right)=(t,w). 
\]
Note that $T^2=\rm{Id}$ and $H$ takes the following form in $(t,w)$-coordinate:
\begin{align*}
(t,w) \mapsto (x,y) \mapsto (y, p(y)-\delta x) \mapsto \left(\frac{y}{p(y)-\delta x}, \frac{1}{p(y)-\delta x}\right)\\
=\left( \frac{w^{d-1}}{w^d p(1/w)-\delta t w^{d-1}},\frac{w^{d}}{w^d p(1/w)-\delta t w^{d-1}}\right). 
\end{align*}
\begin{lem} \label{lem zeta inv}
With the above notations, near $p=[0:1:0]$ we have 
\begin{equation}\label{zeta inv}
 {1}/{\zeta}= w(1+ w \alpha(t,w)),
\end{equation}
where $\alpha(t,w)$ is a power series in $t, w$.
\end{lem}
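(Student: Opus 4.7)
The plan is to exhibit $1/\zeta$ as a jointly holomorphic function of $(t,w)$ on a full neighborhood of $(0,0)$, observe that it vanishes on $\{w=0\}$, and then read off the linear-in-$w$ coefficient from the asymptotic $\phi_H^+(x,y)\sim y$.

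First I would observe that on $V_R^+$ the Green's function $G_H^+=\log|\phi_H^+|$ is strictly positive, so $|\phi_H^+|>1$ and hence $u:=1/\phi_H^+$ is a bounded holomorphic function on $V_R^+$, with $|u|<1$. The involution $T(x,y)=(x/y,1/y)$ maps $V_R^+$ biholomorphically onto the punctured polydisc $\{(t,w):|t|<1,\,0<|w|<1/R\}$, so the push-forward $u\circ T^{-1}(t,w)$ is bounded and holomorphic there. By Riemann's removable singularity theorem (bounded holomorphic functions extend across analytic hypersurfaces in $\mathbb C^2$), $u\circ T^{-1}$ extends holomorphically across the hypersurface $\{w=0\}$ to the full polydisc; I denote this extension again by $u(t,w)$.

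Next, as $w\to 0$ with $|t|<1$ fixed, the preimage $(x,y)=(t/w,1/w)$ exits every compact subset of $V_R^+$, so $\phi_H^+(x,y)\to\infty$ and therefore $u(t,0)=0$ for all $|t|<1$. Hence $u$ is divisible by $w$: there exists a holomorphic germ $g$ at $(0,0)$ with $u(t,w)=w\,g(t,w)$. Using the asymptotic $\phi_H^+(x,y)/y\to 1$ on $V_R^+$, I then compute
\[
g(t,w)\;=\;\frac{u(t,w)}{w}\;=\;\frac{1}{w\,\phi_H^+(t/w,1/w)}\;=\;\frac{y}{\phi_H^+(x,y)}\;\longrightarrow\;1\qquad(w\to 0),
\]
so that $g(t,0)\equiv 1$ for all $|t|<1$. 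Consequently $g(t,w)-1$ vanishes identically on $\{w=0\}$ and so is divisible by $w$; writing $g(t,w)=1+w\,\alpha(t,w)$ for a holomorphic germ $\alpha$ at $(0,0)$ yields $1/\zeta=u(t,w)=w\bigl(1+w\,\alpha(t,w)\bigr)$, as claimed.

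The only substantive point is the removable-singularity invocation in the first step; once $u$ is known to be jointly holomorphic at $\{w=0\}$ the rest is routine divisibility in the local ring at $(0,0)$. The subtlety I would take care to justify is the uniformity of the asymptotic $\phi_H^+\sim y$ on $V_R^+$ — built into the convergence of the Böttcher infinite product recalled in the preliminaries — which is precisely what upgrades the pointwise identity $g(0,0)=1$ to the stronger identity $g(t,0)\equiv 1$ needed to extract the factor $1+w\alpha(t,w)$.
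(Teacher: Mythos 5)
Your proof is correct and takes essentially the same approach as the paper: both rely on a bounded-holomorphic-extension (removable singularity) argument across $\{w=0\}$ together with the asymptotic $\phi_H^+\sim y$ to peel off the leading factors of $w$. The only cosmetic difference is that you extend $u=1/\zeta$ first (using the immediate bound $|u|<1$) and then divide by $w$ twice in the local ring, whereas the paper factors out $w$ at the outset, writes $1/\zeta=wX(t,w)$ with $X$ the explicit infinite product, and extends $X$ directly by arguing that $|y/\phi_H^+|$ is bounded.
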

\begin{proof}
Note that 
\[
\frac{1}{\zeta}=\frac{1}{\phi_H^+(x,y)}=\lim_{n\rightarrow \infty}{\left [\frac{1}{{(H^n(x,y))}_2}\right]}^{\frac{1}{d^n}}=\lim_{n\rightarrow \infty}{\left[{(TH^n T^{-1}(t,w))}_2\right]}^{\frac{1}{d^n}}.
\]
Equivalently,  
\begin{eqnarray*}
\frac{1}{\zeta}&=& w. {\left[\left(\frac{w^d}{w^d p(\frac{1}{w})-\delta t w^{d-1}}\right)/{w^d}\right]}^{\frac{1}{d}}.
{\left[\left(\frac{w_1^d}{w_1^d p(\frac{1}{w_1})-\delta t_1 w_1^{d-1}}\right)/{w_1^d}\right]}^{\frac{1}{d^2}}\cdots \\
&=& w. {\left[\frac{1}{w^d p(\frac{1}{w})-\delta t w^{d-1}}\right]}^{\frac{1}{d}}.{\left[\frac{1}{w_1^d p(\frac{1}{w_1})-\delta t_1 w_1^{d-1}}\right]}^{\frac{1}{d^2}} \cdots =w X(t,w)
\end{eqnarray*}
(here the $d^n$-th roots, for $n\geq 1$, are taken to be the principal branches of roots). One can check  that the above series converges. Further, since $\left\lvert {\phi_H^+(x,y)}/{y}\right \rvert$ is bounded in $V_R^+$, it follows that $X(t,w)$ is also bounded in its domain of definition, which is a subset of  $\{\lvert t\rvert<1\}\times \left\{0< \lvert w\rvert<{1}/{R}\right\} $. Thus $X(t,w)$ has a power series expansion. Also note that $X(t,0)=1$, for all $t$. So  $X(t,w)=1+ w \alpha(t,w)$, where $\alpha(t,w)$ is a power series in $t$ and $w$. Therefore, finally we obtain (\ref{zeta inv}).
\end{proof}

Next we consider the following change of variables: $(t,w) \mapsto (r,s)= ({x}/{\zeta}, {1}/{\zeta})$, which  is a biholomorphism and is tangent to the identity (see \cite[Lemma 4.4]{BRT}). Therefore, 
\[
t=r+ T_2(r,s) \text{ and } w=s+ S_2(r,s),
\]
where $T_2$ and  $S_2$ both are power series with monomials of degree at least $2$. 
\begin{lem}
With the above notations, we have 
\begin{eqnarray*}
y&=& \zeta \left(1+{C}/{\zeta}+ U\left({x}/{\zeta}, {1}/{\zeta}\right)\right),
\end{eqnarray*}
where $C\in \mathbb{C}$ and  $U$ is a power series in two variables with monomials of degree at least $2$. 
\end{lem}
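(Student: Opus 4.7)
The plan is a direct computation. Lemma \ref{lem zeta inv} rearranges to $\zeta = y/(1+w\alpha(t,w))$, which yields the clean identity
\[
y = \zeta\bigl(1 + w\alpha(t,w)\bigr).
\]
So it suffices to show that, after substituting the change of variables $t = r+T_2(r,s)$, $w = s+S_2(r,s)$ (where $T_2, S_2$ are power series of order $\geq 2$ introduced in the paragraph before the statement), the quantity $w\alpha(t,w)$, expressed in $(r,s)$, takes the form $Cs + U(r,s)$ with $C \in \mathbb{C}$ and $U$ a power series of order at least $2$.

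To verify this, write $\alpha(t,w) = \alpha(0,0) + \alpha_1(t,w)$ with $\alpha_1$ of order $\geq 1$ in $(t,w)$. Since $T_2, S_2$ vanish at the origin, the composition $\alpha_1(r + T_2(r,s),\, s + S_2(r,s))$ is a power series in $(r,s)$ of order $\geq 1$. Expanding,
\[
(s + S_2)\bigl[\alpha(0,0) + \alpha_1(r+T_2, s+S_2)\bigr] = \alpha(0,0)\,s + \alpha(0,0)\,S_2(r,s) + (s + S_2)\,\alpha_1(r+T_2, s+S_2),
\]
where the second term is of order $\geq 2$ (because $S_2$ is), and the third is a product of two power series each of order $\geq 1$ in $(r,s)$, hence also of order $\geq 2$. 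Setting $C := \alpha(0,0)$ and letting $U(r,s)$ be the sum of the last two terms gives the desired decomposition.

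Substituting back into $y = \zeta(1 + w\alpha(t,w))$ and using $\zeta s = 1$ yields
\[
y = \zeta\bigl(1 + Cs + U(r,s)\bigr) = \zeta\bigl(1 + C/\zeta + U(x/\zeta,\, 1/\zeta)\bigr),
\]
as required. The only delicate aspect is the order bookkeeping in the power series substitution, which is entirely routine; the main step of real content is the rearrangement of Lemma \ref{lem zeta inv} that produces the identity $y = \zeta(1+w\alpha(t,w))$ in the first place.
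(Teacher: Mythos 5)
Your proof is correct and follows essentially the same route as the paper: rearrange Lemma \ref{lem zeta inv} to get $y=\zeta(1+w\alpha(t,w))$, substitute $t=r+T_2$, $w=s+S_2$, and peel off the constant term of $\alpha$ (the paper writes $\beta(r,s)=\alpha(r+T_2,s+S_2)$ and takes $C$ to be its constant term, which is exactly your $\alpha(0,0)$). Your version is, if anything, slightly more careful about the order bookkeeping that makes $U$ start at degree $2$, which the paper states but leaves implicit.
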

\begin{proof}
By Lemma \ref{lem zeta inv}, we have ${1}/{\zeta}= w(1+w \alpha(t,w))$. Thus  ${y}/{\zeta}= (1+w \alpha(t,w))$. Therefore, by replacing $t=r+ T_2(r,s) \text{ and } w=s+ S_2(r,s)$, we get
\begin{eqnarray*}
y&=& \zeta \left(1+(s+ S_2(r,s))(\alpha(r+ T_2(r,s), s+ S_2(r,s) ))\right)\\
&=&\zeta \left(1+s \beta(r,s)+ S_2(r,s) \beta(r,s) \right), 
\end{eqnarray*}
where $\beta(r,s)=\alpha(r+ T_2(r,s), s+ S_2(r,s))$.
Now since $(r,s)= ({x}/{\zeta}, {1}/{\zeta})$, we have 
\[
y= \zeta \left(1+{C}/{\zeta}+ U\left({x}/{\zeta}, {1}/{\zeta}\right)\right),
\]
where $C\in \mathbb{C}$ is the constant term of the power series expansion of $\beta$ and $U$ is a power series in two variables without constant term. 
\end{proof}
\no 
Therefore, 
\begin{equation*}
y(x,\zeta)=\left(\zeta+C+\frac{A^0_{-1}}{\zeta}+\cdots\right)+x\left(\frac{A_{-1}^1}{\zeta^{m_1}}+\cdots\right)+
x^2\left(\frac{A_{-1}^2}{\zeta^{m_2}}+\cdots\right)+ \cdots+ x^n\left(\frac{A_{-1}^n}{\zeta^{m_n}}+\cdots\right)+\cdots,
\end{equation*}
where $m_1=1$, $m_n=n-1$, for $n\geq 2$ and $A_{-1}^0, A_{-1}^1,A_{-1}^2$ and so on are in $\mathbb{C}$. 
\medskip 

The following lemma is one of the main takeaways from this section.
\begin{lem}\label{yzetaQ}
With the above notations, we have 
\[
y(0,\zeta)=\zeta+ \frac{D_1}{\zeta}+ \frac{D_2}{\zeta^2}\cdots,
\]
and 
\[
\zeta(0,y)=y+ \frac{L_1}{y}+ \frac{L_2}{y^2}\cdots,
\]
where $D_i$'s and $L_i$'s are constants. Further, $Q(\zeta)$ is given by the polynomial part of $\zeta^d y(0,\zeta)$ and therefore, $Q$ is centered and monic. 
\end{lem}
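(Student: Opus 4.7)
My plan is to prove the three assertions in order. The crux is the first one: the constant term in $y(0,\zeta)=\zeta+C+A^0_{-1}/\zeta+A^0_{-2}/\zeta^2+\cdots$ (obtained by setting $x=0$ in the expansion displayed just above the lemma) vanishes. Once $C=0$ is known, the second assertion follows by formal series inversion, and the monic--centered property of $Q$ is read off from formula (\ref{Q'}).

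To show $C=0$, I would exploit the functional equation $\phi_H^+\circ H=(\phi_H^+)^d$ restricted to the line $x=0$: for $|y_0|$ large, $\phi_H^+(y_0,p(y_0))=\phi_H^+(0,y_0)^d$, which in the $(x,\zeta)$-coordinates translates (with $\zeta=\phi_H^+(0,y_0)$ and $y_0=y(0,\zeta)$) to the asymptotic identity
\[
p\bigl(y(0,\zeta)\bigr)=y\bigl(y(0,\zeta),\zeta^d\bigr)\qquad\text{as }|\zeta|\to\infty.
\]
I would substitute $y_0=\zeta+C+O(1/\zeta)$ into both sides and match the coefficient of $\zeta^{d-1}$. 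On the right-hand side, an exponent count using $m_1=1$ and $m_n=n-1$ for $n\geq 2$ shows that no $u$-dependent piece of $y(u,\zeta^d)$ evaluated at $u=y_0\sim\zeta$ can reach $\zeta^{d-1}$; the $n=0$ piece contributes $\zeta^d+C$ plus strictly negative powers. So the right-hand side has zero coefficient at $\zeta^{d-1}$. On the left-hand side, $p(y_0)=y_0^d+\sum_{i\le d-2}a_iy_0^i$ has $\zeta^{d-1}$-coefficient $dC+a_{d-1}$, which equals $dC$ since $p$ is centered. Hence $dC=0$, i.e., $C=0$, and setting $D_k:=A^0_{-k}$ proves the first assertion.

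With $y(0,\zeta)=\zeta+D_1/\zeta+D_2/\zeta^2+\cdots$ in hand, the inverse series $\zeta(0,y)=y+L_1/y+L_2/y^2+\cdots$ is obtained by standard iterative substitution (the leading part is just $\zeta$, so the inversion is well-defined). For $Q$, formula (\ref{Q'}) writes $Q(\zeta)$ as the polynomial part of $\zeta^d\int_0^{y(0,\zeta)}\partial_\zeta y(u,\zeta^d)\,du$. The $u$-independent leading term $1$ of $\partial_\zeta y(u,\eta)$ contributes $\zeta^d\cdot y(0,\zeta)=\zeta^{d+1}+D_1\zeta^{d-1}+\cdots$, which is monic of degree $d+1$ and has no $\zeta^d$ term (because $y(0,\zeta)$ has no constant term). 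A final exponent check, again using $m_n=n-1$, shows that the $u^n$ remainders with $n\geq 1$ in $\partial_\zeta y(u,\eta)$ cannot contribute to the $\zeta^{d+1}$ or $\zeta^d$ coefficient after integration on $[0,y(0,\zeta)]$ and multiplication by $\zeta^d$. Consequently $Q$ is monic of degree $d+1$ and centered. The delicate point throughout is this exponent bookkeeping — it is what converts the algebraic hypothesis $a_{d-1}=0$ into the analytic vanishing $C=0$, and then transfers into the centering of $Q$.
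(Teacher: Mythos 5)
Your proposal is correct, and it takes a genuinely different route to the key step. The paper quotes \cite[Prop.\ 5.2]{HO} to get the vanishing of the constant term in $\zeta(0,y)$ directly, and then transfers this to $y(0,\zeta)$ by composing with the inverse series and matching constants. You instead attack $C$ (the constant term of $y(0,\zeta)$) head-on, using the functional equation $\phi_H^+\circ H=(\phi_H^+)^d$ along the slice $x=0$, which gives $p\bigl(y(0,\zeta)\bigr)=y\bigl(y(0,\zeta),\zeta^d\bigr)$. Your exponent bookkeeping with $m_1=1$, $m_n=n-1$ is correct: for $n\ge 1$ the $u^n$ block of $y(u,\zeta^d)$ evaluated at $u=y_0\sim\zeta$ contributes at most $\zeta^{1-d}$ (for $n=1$) or $\zeta^{\,n-d(n-1)}\le\zeta^{2-d}$ (for $n\ge2$), none of which reaches $\zeta^{d-1}$ when $d\ge2$; the $n=0$ block is $\zeta^d+C+O(\zeta^{-d})$, which also has no $\zeta^{d-1}$ term. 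On the left, $p(y_0)$ has $\zeta^{d-1}$-coefficient $dC+a_{d-1}=dC$, forcing $C=0$. Then $\zeta(0,y)$ has no constant term by inverting the series, reversing the direction of implication that the paper uses. The advantage of your route is that it is self-contained — it exhibits exactly how the algebraic hypothesis $a_{d-1}=0$ produces the analytic normalization — whereas the paper's route is shorter but outsources the crucial step to \cite{HO}. Your treatment of the last assertion is essentially the same as the paper's: both read off monicity and centering of $Q$ from (\ref{Q'}), with your exponent check making explicit the claim the paper summarizes as ``follows immediately.'' (One minor caveat applying to both arguments: the precise statement that $Q$ coincides with the polynomial part of $\zeta^d y(0,\zeta)$ requires a bit more care in low degree, where the $u$-dependent blocks of $\partial_\zeta y$ can touch lower-order polynomial coefficients; but the conclusion actually used downstream — that $Q$ is monic of degree $d+1$ with vanishing $\zeta^d$-coefficient — is exactly what your exponent check establishes.)
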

\begin{proof}
Since $p$ is monic and centered in (\ref{Henon}),  it follows from \cite[Prop.\ 5.2]{HO} that
\[
\zeta(0,y)=y+ \frac{L_1}{y}+ \frac{L_2}{y^2} \cdots.
\]
Now since $y=y(0,\zeta(0,y))$, 
\begin{eqnarray*}
y=y(0,\zeta(0,y))&=& \zeta(0,y)+D_0+ \frac{D_1}{\zeta(0,y)}+\cdots\\
&=& \left(y+\frac{L_1}{y}+\frac{L_2}{y^2}+\cdots\right)+D_0+ D_1{\left(y+\frac{L_1}{y}+\frac{L_2}{y^2}+\cdots\right)}^{-1}+\cdots.
\end{eqnarray*}
Thus $D_0=0$ and consequently, we have
\[
y(0,\zeta)=\zeta+\frac{D_1}{\zeta}+ \frac{D_2}{\zeta^2}+\cdots.
\]
The last assertion follows immediately from (\ref{Q'}).
\end{proof}
As we shall see in Section 3 and in Section 4, the particular forms of the  power series representations of $y(0,\zeta)$ and $\zeta(0,y)$ as obtained in Lemma \ref{yzetaQ} and the knowledge of analytic structure of the escaping sets of H\'{e}non maps are the key players in unravelling the relation between any pair of H\'{e}non maps with biholomorphic escaping sets. 

Here we give a brief description of analytic structure of H\'{e}non maps. For a detailed account of analytic structure of escaping sets of H\'{e}non maps the inquisitive readers can look at  \cite{Favre}, \cite{HO} and \cite{MNTU}.  For any H\'{e}non map $H$, it turns out that the Riemann surface of the B\"{o}ttcher function $\phi_H^+$ is a  covering space of the escaping set $I_H^+$ and it is isomorphic to $\mathbb{C}\times (\mathbb{C}\setminus \bar{\mathbb{D}})$. For a H\'{e}non map $H$ of degree $d$,  the fundamental group of $I_H^+$ is 
\[
\mathbb{Z}\left[{1}/{d}\right]=\left \{{k}/{d^n}: k,n \in \mathbb{Z}\right \}
\]
and the covering $\mathbb{C}\times (\mathbb{C}\setminus \bar{\mathbb{D}})$ of $I_H^+$ arises corresponding to  the subgroup $\mathbb{Z} \subseteq \mathbb{Z}\left[{1}/{d}\right]$. Therefore, $I_H^+$ is a quotient of $\mathbb{C}\times (\mathbb{C}\setminus \bar{\mathbb{D}})$ by some discrete subgroup of the automorphisms of $\mathbb{C}\times (\mathbb{C}\setminus \bar{\mathbb{D}})$ isomorphic to ${\mathbb{Z}\left[{1}/{d}\right]}/{\mathbb{Z}}$.  For each element $\left[{k}/{d^n}\right] \in \mathbb{Z}[{1}/{d}]/\mathbb{Z}$, there exists a unique deck  transformation $\gamma_{{k}/{d^n}}$ from  $\mathbb{C}\times (\mathbb{C}\setminus\bar{\mathbb{D}})$ to $\mathbb{C}\times (\mathbb{C}\setminus\bar{\mathbb{D}})$  such that 
\begin{equation}\label{fiber form}
\gamma_{{k}/{d^n}}
\begin{bmatrix}
z\\ \zeta
\end{bmatrix}
=\begin{bmatrix} 
 z+ \frac{d}{\delta} \sum_{l=0}^{n-1} {\left(\frac{d}{\delta}\right)}^l\left(Q(\zeta^{d^l})-Q\left( {\left(e^{\frac{2 k\pi i}{d^n}}\zeta)\right)}^{d^l} \right)\right)\\
 e^{\frac{2 k\pi i}{d^n}}\zeta
 \end{bmatrix},
\end{equation}
for $n\geq 0$ and $k\geq 1$. Further, 
$H$ lifts as a holomorphic map 
\begin{equation} \label{H tilde}
\tilde{H}(z,\zeta)=\left(({\delta}/{d})z+Q(\zeta), {\zeta}^d \right)
\end{equation}
from $\mathbb{C}\times (\mathbb{C}\setminus \bar{\mathbb{D}})$ to $\mathbb{C}\times (\mathbb{C}\setminus \bar{\mathbb{D}})$.

\section{A brief idea of the proof of the main theorem}
\no 
In this section we sketch the main idea of the proof of  Theorem \ref{main}. 

\medskip 
\no 
Let $H(x,y)=(y,p_H(y)-\delta_H x)$ and $F(x,y)=(y,p_F(y)-\delta_F x)$ be  a pair of H\'{e}non maps of degree $d$. Thus the fundamental groups of both $I_H^+$ and $I_F^+$ are  $\mathbb{Z}[{1}/{d}]$. Let $a$ be a biholomorphism between $I_H^+$ and $I_F^+$.   Any biholomorphism between $I_H^+$ and $I_F^+$, which induces identity as an isomorphism between the fundamental groups of $I_H^+$ and $I_F^+$, can be lifted as a biholomorphism from $\mathbb{C}\times (\mathbb{C}\setminus \bar{\mathbb{D}})$ to $\mathbb{C}\times (\mathbb{C}\setminus \bar{\mathbb{D}})$.  Now since any group isomorphism of $\mathbb{Z}[{1}/{d}]$ is of the form $x\mapsto \pm d^s x$, for some $s\in \mathbb{Z}$, the map $a$ up to pre-composition with some $n$-fold iterates of $F$ (or $F^{-1}$), i.e.,  the map $F^{\pm n}\circ a$, for some $n\in \mathbb{N}$, induces identity map between the fundamental groups of the escaping sets $I_H^+$ and $I_F^+$. Thus, it is harmless to assume that $a$ lifts  as a biholomorphism $A$ from  $\mathbb{C}\times (\mathbb{C}\setminus \bar{\mathbb{D}})$ to  $\mathbb{C}\times (\mathbb{C}\setminus \bar{\mathbb{D}})$. Therefore, $\pi \circ A=a\circ \pi$ and consequently, the fiber of any point $p\in I_H^+$ of the natural projection map $\pi_H: \mathbb{C}\times (\mathbb{C}\setminus \bar{\mathbb{D}}) \rightarrow I_H^+$ maps into  the fiber of the point $a(p)\in I_F^+$ of the projection map $\pi_F: \mathbb{C}\times (\mathbb{C}\setminus \bar{\mathbb{D}}) \rightarrow I_F^+$ by the biholomorphism $A$. Fiber of any point of the projection maps $\pi_H$ and $\pi_F$ are captured by the group of deck transformation of $\mathbb{C}\times (\mathbb{C}\setminus \bar{\mathbb{D}})$, which is isomorphic to $\mathbb{Z}[{1}/{d}]/\mathbb{Z}$.
Thus, it turns out that if $(z,\zeta)\in \mathbb{C}\times (\mathbb{C}\setminus \bar{\mathbb{D}})$ is in the fiber of any point $p\in I_H^+$ (or $I_F^+$), then any other point in the same fiber will be of the form $\gamma_{{k}/{d^n}}(z,\zeta)$, for $k\geq 1$ and $n\geq 0$,  where $\gamma_{{k}/{d^n}}$ is the deck transformation corresponding to $[{k}/{d^n}]\in \mathbb{Z}[{1}/{d}]/\mathbb{Z}$. An explicit description of this fibers can be written down (see (\ref{fiber form})). Now adapting an idea of Bousch (\cite{bousch}), one can narrow down the possible forms of $A$ by comparing the fibers of $p$ and $a(p)$. In fact, in our case, $A$ has a very simple form $(u,v)\mapsto (u+\gamma, \alpha v)$, with $\lvert \alpha\rvert=1$ and $\gamma\in \mathbb{C}$. Thus we obtain an explicit expression for the  image of the foliation $\tilde{\phi}_H^+=\zeta_0$  under the map $A$. These foliations come down to the corresponding escaping sets and induce rigidity on escaping sets. As a consequence, one expects a close relation between $H$ and $F$, which is  validated in our main theorem.

The above-mentioned idea was employed by  Bonnot--Radu--Tanase in \cite{BRT} in establishing the relation between $H$ and $F$ with biholomorphic escaping sets when $\deg(H)=\deg(F)=2$. 
 They attempts to extract the relation between the coefficients of $p_H$ and $p_F$ by directly comparing the fibers of $p$ and $a(p)$. Although their approach works for lower degrees (for $d=2,3$), since precise computations can be carried out in these cases but as degree increases, to extract the relation between the coefficients of $p_H$ and $p_F$ seems very difficult by performing such direct calculations.

We take a different approach towards this problem. For two H\'{e}non maps $H$ and $F$ with biholomorphic escaping sets, we first establish the relation between coefficients of $Q_H$ and $Q_F$ (see Section 3).  The explicit relation between $Q_H$ and $Q_F$  is used to obtain a neat relation between the polynomials $p_H$ and $p_F$, namely,  $\beta p_H(y)=\alpha p_F(\alpha y)$, for all $y\in \mathbb{C}$ with $\alpha^{d+1}=\beta$ and $\beta^{d-1}=1$ (see Section 4). To establish the relation between $H$ and $F$, we are yet to investigate the relation between the Jacobians $\delta_H$ and $\delta_F$.  It turns out that $\delta_H=\gamma \delta_F$, with $\gamma^{d-1}=1$, thanks to \cite{BRT}. It is noteworthy that all our calculations in Section 3 and in Section 4 are performed under the assumption that $\delta_H=\delta_F$. In Section 5, we outline how to handle the case when $\delta_H$ and $\delta_F$ are different.




\section{Relation between $Q_H$ and $Q_F$}
\no 
Let $H$ and $F$ be two H\'{e}non maps as in Theorem \ref{main} with bihholomorphic escaping sets $I_H^+$ and $I_F^+$, respectively. As indicated in the Introduction, for now we assume $\delta_H=\delta_F=\delta$. Later (in Section 6), we handle the case when $\delta_H\neq \delta_F$.

\medskip 
\no 
{{\it {Lifting  biholomorphisms between escaping sets:}}
Recall from Section 2 that the fundamental group of escaping set of any H\'{e}non map of degree $d$ is $\mathbb{Z}[{1}/{d}]$ and $\mathbb{C}\times (\mathbb{C}\setminus \bar{\mathbb{D}})$ is the covering of the escaping set corresponding to the subgroup $\mathbb{Z}\subseteq \mathbb{Z}[{1}/{d}]$. 
Let $\pi_H: \mathbb{C}\times (\mathbb{C}\setminus \bar{\mathbb{D}}) \rightarrow I_H^+$ and $\pi_F: \mathbb{C}\times (\mathbb{C}\setminus \bar{\mathbb{D}}) \rightarrow I_F^+$ be the covering maps. Now note that a biholomorphism $a$ from $I_H^+$ to $I_F^+$ can be lifted as an automorphism  $A$ of $\mathbb{C}\times (\mathbb{C}\setminus \bar{\mathbb{D}})$ if and only if the induced group isomorphism
\[
\pi_1(a): \mathbb{Z}\left[{1}/{d}\right] \rightarrow \mathbb{Z}\left[{1}/{d}\right]
\]
is $\pm\rm{Id}$ (identity maps). It is easy to see that any group isomorphism of $\mathbb{Z}[{1}/{d}]$ is of the form $x\mapsto \pm d^{s} x$, for some $s\in \mathbb{Z}$. Thus, there exists $n\in \mathbb{N}$ such that $F^{\pm n}\circ a$ induces identity on $\mathbb{Z}[{1}/{d}]$ and thus lifts as an automorphism of $\mathbb{C} \times \left(\mathbb{C}\setminus \bar{\mathbb{D}}\right)$. Therefore, without loss of generality, we assume that $a$ lifts as an automorphism $A$ of 
$\mathbb{C} \times \left(\mathbb{C}\setminus \bar{\mathbb{D}}\right)$.

\[ \begin{tikzcd}
\mathbb{C} \times \left(\mathbb{C}\setminus \bar{\mathbb{D}}\right) \arrow{r}{A} \arrow[swap]{d}{\pi_H} & \mathbb{C} \times \left(\mathbb{C}\setminus \bar{\mathbb{D}}\right) \arrow{d}{\pi_F} \\%
I_H^+ \arrow{r}{a}& I_F^+
\end{tikzcd}
\]
It follows from (\ref{fiber form}) that for $H$ there exists a polynomial $Q_H$ of degree $d+1$ such that for each element $\left[{k}/{d^n}\right] \in \mathbb{Z}[{1}/{d}]/\mathbb{Z}$, with $n\geq 0$ and $k\geq 1$, there exists a unique deck  transformation $\gamma_{{k}/{d^n}}$ from  $\mathbb{C}\times (\mathbb{C}\setminus\bar{\mathbb{D}})$ to $\mathbb{C}\times (\mathbb{C}\setminus\bar{\mathbb{D}})$ of the form
\begin{equation}\label{form aut}
\gamma_{{k}/{d^n}}
\begin{bmatrix}
z\\ \zeta
\end{bmatrix}
=\begin{bmatrix} 
 z+ \frac{d}{\delta} \sum_{l=0}^{n-1} {\left(\frac{d}{\delta}\right)}^l\left(Q_H(\zeta^{d^l})-Q_H\left( {\left(e^{\frac{2 k\pi i}{d^n}}\zeta)\right)}^{d^l} \right)\right)\\
 e^{\frac{2 k\pi i}{d^n}}\zeta
 \end{bmatrix}.
\end{equation}
 The same holds for the H\'{e}non map $F$. Thus if $\tilde{p}=(z,\zeta)\in \mathbb{C}\times \left(\mathbb{C}\setminus \bar{\mathbb{D}}\right)$ lies in the $\pi_H^{-1}(p)$ for some $p\in I_H^+$,  then
\[
\pi_H^{-1}(p)=\{\gamma_{{k}/{d^n}}(z,\zeta): n\geq 0, k\geq 1\}.
\]

\medskip 
\no 
{\it Form of lifts:}
First note that any automorphism  $A$ of $\mathbb{C}\times (\mathbb{C}\setminus \bar{\mathbb{D}})$ is of the form: 
\begin{equation}\label{aut cover}
A(z,\zeta) = (A_1(z,\zeta), A_2(z,\zeta))= (\beta (\zeta) z+ \gamma(\zeta), \alpha \zeta),
\end{equation}
where $\lvert \alpha \rvert=1$ and $\beta, \gamma$ are holomorphic maps from $\mathbb{C}\setminus \bar{\mathbb{D}}$ to $\mathbb{C}^*$ and $\mathbb{C}$, respectively (see \cite[Section 3]{bousch}). 

Note that since $a\circ \pi_H= \pi_F \circ A$,  if  $(z,\zeta), (z',\zeta') \in \pi_H^{-1}(p)$, for some $p\in I_H^+$, then $A(z,\zeta), A(z',\zeta') \in \pi_F^{-1}(a(p))$.  Now let $(z,\zeta)$ and $(z',\zeta')$ be in the same fiber of $\pi_H$,  then using (\ref{form aut}), we have
$${(\zeta'/\zeta)}^{d^n}=1$$
 and 
\[
z'=z+\frac{d}{\delta} \sum_{l=0}^{n-1} {\left(\frac{d}{\delta}\right)}^l\left(Q_H(\zeta^{d^l})-Q_H\left( {\left(e^{\frac{2 k\pi i}{d^n}}\zeta)\right)}^{d^l} \right)\right),
\]
for some $n\in \mathbb{N}$. Therefore the difference between the first coordinates of $A$, i.e.,
\begin{align}\label{A2uv}
A_1(z',\zeta')-A_1(z,\zeta)= 
\left(\beta(\zeta')-\beta(\zeta)\right)u+\beta(\zeta')\frac{d}{\delta} \sum_{l=0}^{n-1} {\left(\frac{d}{\delta}\right)}^l\left(Q_H(\zeta^{d^l})-Q_H\left( {\left(e^{\frac{2 k\pi i}{d^n}}\zeta)\right)}^{d^l} \right)\right)\nonumber \\
+\left(\gamma(\zeta')-\gamma(\zeta)\right).
\end{align}
Now since $A(z,\zeta)$ and $A(z',\zeta')$ are in the same fiber of $\pi_F$,  the difference 
$A_1(z,\zeta)-A_1(z',\zeta')$ is a function of $\alpha \zeta$ and $\alpha \zeta'$. Thus it follows from (\ref{A2uv}) that $\beta(\zeta)=\beta(\zeta')$, i.e.,
\[
\beta(\zeta)=\beta \left(\zeta. e^\frac{2\pi i k}{d^n}\right),
\]
for all $k\geq 1$ and for all $n\geq 0$. Therefore, $\beta(\zeta)\equiv \beta$ in $\mathbb{C}$. 
Thus it follows form (\ref{A2uv}) that
\begin{align}\label{alpha u}
A_1(z',\zeta')-A_1(z,\zeta)=\Delta_H(\zeta,\zeta')+\gamma(\zeta')-\gamma(\zeta),
\end{align}
where 
\[
\Delta_H(\zeta,\zeta')=\beta\frac{d}{\delta} \sum_{l=0}^{n-1} {\left(\frac{d}{\delta}\right)}^l\left(Q_H(\zeta^{d^l})-Q_H\left( {\left(e^{{2 k\pi i}/{d^n}}\zeta)\right)}^{d^l} \right)\right).
\]
On the other hand since $A(z,\zeta)$ and $A(z',\zeta')$ are in the same fiber of $\pi_F$ , 
\begin{equation}\label{alpha uv} 
A_1(z',\zeta')-A_1(z,\zeta)=\Delta_F(\alpha \zeta,\alpha \zeta')= \frac{d}{\delta} \sum_{l=0}^{n-1} {\left(\frac{d}{\delta}\right)}^l\left(Q_F(\alpha^{d^l} \zeta^{d^l})-Q_F\left( {\left(e^{\frac{2 k\pi i}{d^n}}\alpha \zeta)\right)}^{d^l} \right)\right). 
\end{equation}
Since $\gamma$ is a holomorphic function on $\mathbb{C}\setminus \bar{\mathbb{D}}$, comparing (\ref{alpha u}) and (\ref{alpha uv}), it follows that for a fixed $\zeta\in \mathbb{C}\setminus \bar{\mathbb{D}}$, the modulus of the difference between $\Delta_H(\zeta,\zeta')$ and $\Delta_F(\alpha \zeta,\alpha \zeta')$ is uniformly bounded for all $v'=e^{{2 k\pi i}/{d^n}}\zeta$ with $n\geq 0$ and $k\geq 1$. Note that for $\zeta'=e^{{2 k\pi i}/{d^n}}\zeta$, the difference  $\Delta_H(\zeta,\zeta')-\Delta_F(\alpha \zeta,\alpha \zeta')$ is a polynomial of degree $(d+1)d^{n-1}$ and it can be written as
\begin{equation}
\begin{aligned}\label{QHQF}
& \beta \sum_{l=0}^{n-1} {\left(\frac{d}{\delta}\right)}^l\left(Q_H\left(\zeta^{d^l}\right)-Q_H\left( {\left(e^{\frac{2 k\pi i}{d^n}}\zeta\right)}^{d^l} \right)\right)- \sum_{l=0}^{n-1} {\left(\frac{d}{\delta}\right)}^l\left(Q_F\left(\alpha^{d^l} \zeta^{d^l}\right)-Q_F\left( {\left(e^{\frac{2 k\pi i}{d^n}}\alpha \zeta\right)}^{d^l} \right)\right) \nonumber \\
=& \sum_{l=0}^{n-1} {\left(\frac{d}{\delta}\right)}^l\left[\beta Q_H\left(\zeta^{d^l}\right)-Q_F\left(\alpha^{d^l} \zeta^{d^l}\right)\right]- \sum_{l=0}^{n-1} {\left(\frac{d}{\delta}\right)}^l\left[\beta Q_H\left( {\left(e^{\frac{2 k\pi i}{d^n}}\zeta\right)}^{d^l} \right)- Q_F\left({\left(e^{\frac{2 k\pi i}{d^n}}\alpha \zeta\right)}^{d^l} \right)\right] \nonumber\\
=&\frac{d^{n-1}}{\delta^{n-1}} \left[ \beta {\zeta}^{d^{n-1}(d+1)}-{\left(\alpha \zeta \right)}^{d^{n-1} (d+1)}-
 \beta {\left(e^{\frac{2 \pi i}{d^n}}\zeta\right)}^{d^{n-1}(d+1)}+{\left(\alpha  \zeta  e^{\frac{2 \pi i}{d^n}}\right)}^{d^{n-1}(d+1)}\right] +R(\zeta) \nonumber \\ 
 =& {\left(\frac{d}{\delta}\right)}^{n-1} \left( \beta- \alpha^{d^{n-1}(d+1)}\right)\left( 1-e^{\frac{2\pi i}{d}}\right)\zeta^{d^{n-1}(d+1)} \left[1 + \tilde{R}(\zeta)\right],
\end{aligned}
\end{equation}
where $R(\zeta)=O\left(\zeta^{d^n}\right)$  and $$\tilde{R}(\zeta)=R(\zeta){\left({d}/{\delta}\right)}^{-n+1} {\left( \beta- \alpha^{d^{n-1}(d+1)}\right)}^{-1}{\left( 1-e^{\frac{2\pi i}{d}}\right)}^{-1}\zeta^{-d^{n-1}(d+1)}.$$
We claim that $\alpha^{d^{n-1}(d+1)}\rightarrow \beta$, as $n\rightarrow \infty$.
If not, then there exists a subsequence $\{n_l\}_{l\geq 1}$ such that 
\begin{equation}\label{alphaa beta}
\left\lvert \alpha^{d^{n_l-1}(d+1)}- \beta\right\rvert >c>0,
\end{equation}
 for all $l\geq 1$. Thus, if $\lvert {d}/{\delta} \rvert\geq1$, then 
(note that $R$ is a polynomial  of degree at most $d^n$) 
\begin{equation*} \label{alpha beta}
\lvert \tilde{R}(\zeta)\rvert \leq \frac{4nK_1}{{\lvert \zeta \rvert}^{d^{n-1}}},
\end{equation*} 
for some $K_1>1$. On the other hand, if $\lvert {d}/{\delta} \rvert <1$, then 
\begin{equation*} \label{alpha beta}
\lvert \tilde{R}(\zeta)\rvert \leq \frac{4nK_2}{{\lvert \zeta \rvert}^{d^{n-1}}}{\left(\frac{\delta}{d}\right)}^{n-1},
\end{equation*} 
for some $K_2>1$.  Therefore, since the modulus of $\Delta_H\left(\zeta,e^{\frac{2\pi i}{d^n}}\zeta\right)-\Delta_F\left(\alpha \zeta, \alpha e^{\frac{2\pi i}{d^n}}\zeta\right)$ is uniformly bounded, for all $n\geq 1$, we get a contradiction if (\ref{alphaa beta}) holds. 
Thus
\begin{equation}\label{beta one}
\alpha^{d^{n}(d+1)} \rightarrow  \beta,
\end{equation}
as $n \rightarrow \infty$.  Also, 
\begin{equation}\label{beta two}
\alpha^{d^{n+1}(d+1)} \rightarrow  \beta
\end{equation}
as $n\rightarrow \infty$ and thus dividing  (\ref{beta two}) by  (\ref{beta one}) and taking the limit, we get 
\begin{equation}
\alpha^{(d+1)(d-1)d^{n}} \rightarrow  1,
\end{equation}
as  $n\rightarrow \infty$. Therefore, we get 
\begin{equation*}\label{beta}
\beta^{d-1}=1.
\end{equation*} 
On the other hand, it follows from (\ref{beta one}) that $ {\left(\alpha^{d+1}\right)}^{d^n} \rightarrow \beta$ and also note that $\beta$ is a repelling fixed point for the map $z\mapsto z^d$. Therefore, (\ref{beta one}) holds if and only if
\begin{equation*}\label{alpha d}
\alpha^{d+1}=\beta.
\end{equation*}
Thus $\left(\alpha^{d^{n}(d+1)} - \beta\right)=0$, for all $n\geq 1$, which in turn gives $\gamma(\zeta)=\gamma\left(e^{\frac{2k\pi i}{d^n}}\zeta\right)$, for all $k\ge1$ and $n\geq 0$. Thus $\gamma\equiv \gamma_0$, for some $\gamma_0\in \mathbb{C}$.

\medskip
\no 
{\it Relation between $Q_H$ and $Q_F$:} Note that it follows from Lemma \ref{yzetaQ} that next to the highest degree coefficients of $Q_H$ and $Q_F$ vanish. Let 
 \begin{equation}\label{QH}
 Q_H(\zeta)=\zeta^{d+1}+A^H_{d-1} \zeta^{d-1}+ A_{d-2}^H \zeta^{d-2} + \cdots + A_1^H \zeta+A_0^H
 \end{equation}
 and 
 \begin{equation}\label{QF}
 Q_F(\zeta)=\zeta^{d+1}+A^F_{d-1} \zeta^{d-1}+ A_{d-2}^F \zeta^{d-2} + \cdots + A_1^F \zeta+A_0^F.
 \end{equation}
  Since $\gamma$ is identically constant in the complex plane, taking $k,n=1$, it follows from
 (\ref{alpha u}) and  (\ref{alpha uv}) that 
 \begin{equation}\label{ReQHQF}
 \beta \frac{d}{\delta} \left[Q_H(\zeta)-Q_H\left(e^{\frac{2\pi i}{d}} \zeta\right)\right]=\Delta_H(\zeta, \zeta')=\Delta_F (\alpha \zeta, \alpha \zeta')=\frac{d}{\delta} \left[Q_F(\alpha \zeta)-Q_F\left(e^{\frac{2\pi i}{d}} \alpha \zeta\right)\right].
 \end{equation}
 Comparing coefficients of both sides of (\ref{ReQHQF}), we get
 \begin{equation*}
 \beta A_{d-k}^H=\alpha^{d-k} A_{d-k}^F,
 \end{equation*}
 for $1\leq k \leq (d-1)$ and since $\beta=\alpha^{d+1}$, equivalently we get
  \begin{equation}\label{coeffQ}
  A_{d-k}^H=\alpha^{-(k+1)} A_{d-k}^F,
 \end{equation}
 for $1\leq k \leq (d-1)$. Note that the relation between the constant terms of $Q_H$ and $Q_F$, i.e., the relation between $A_0^H$ and $A_0^F$ cannot be extracted from (\ref{ReQHQF}). However, as we are going to see in the next section that the explicit relation between $A_{d-i}^H$ and  $A_{d-i}^F$, for $1\leq i\leq (d-1)$, obtained in (\ref{coeffQ}) is sufficient to track down the relation between $p_H$ and $p_F$.

\section 
{Relation between $p_H$ and $p_F$}
\no 
It follows from Lemma \ref{yzetaQ} that there exist 
\begin{equation} \label{yH}
 y_H(\zeta)\equiv y_H(0, \zeta) =\zeta+\frac{D_1^H}{\zeta}+ \frac{D_2^H}{\zeta^2}
 +\cdots+ \frac{D_{d-1}^H}{\zeta^{d-1}}
   +O\left(\frac{1}{\zeta^d}\right)
 \end{equation}
 and 
 \begin{equation}\label{yF}
 y_F(\zeta)\equiv y_F(0, \zeta)=\zeta+ \frac{D_1^F}{\zeta}+ \frac{D_2^F}{\zeta^2}
 +\cdots+ \frac{D_{d-1}^F}{\zeta^{d-1}} + O\left(\frac{1}{\zeta^d}\right),
 \end{equation}
 with $D_i^H, D_i^F\in \mathbb{C}$ for $i\geq 1$, such that $Q_H(\zeta)$ and $Q_F(\zeta)$ are polynomial parts of $\zeta^d y_H(0,\zeta)$ and $\zeta^d y_F(0,\zeta)$, respectively.  Therefore, the precise relation obtained in (\ref{coeffQ}) determines a similar relation between the first few corresponding coefficients of the power series expansion of $y_H(\zeta) $ and $y_F(\zeta)$, namely
 \begin{equation} \label{DHDF}
 D_k^H=\alpha^{-(k+1)}D_k^F,
 \end{equation}
  for $1\leq k \leq (d-1)$. 
  
  Since we assume that the Jacobian determinants of $H$ and $F$ are the same, that is, $\delta_H=\delta_F=\delta$,  understanding the relation between the polynomials $p_H$ and $p_F$ is sufficient to capture the relation between $H$ and $F$.  We  show in this section  that the relation between the coefficients of $p_H$ and $p_F$ can be extracted via the B\"{o}ttcher coordinates of $H$ and $F$, namely via the functions $\zeta_H$ and $\zeta_F$, respectively. Let 
  \begin{equation}\label{zetaH}
 \zeta_H(y)\equiv  \zeta_H(0,y)=y+\frac{L_1^H}{y}+ \frac{L_2^H}{y^2}
 +\cdots+ \frac{L_{d-1}^H}{y^{d-1}} + O\left(\frac{1}{y^d}\right)
 \end{equation}
 and 
 \begin{equation}\label{zetaF}
 \zeta_F(y)\equiv  \zeta_F(0,y)=y+\frac{L_1^F}{y}+ \frac{L_2^F}{y^2}
 +\cdots+ \frac{L_{d-1}^F}{y^{d-1}} + O\left(\frac{1}{y^d}\right).
 \end{equation}
 Recall from Section 2 that $\zeta_H \circ y_H (0,\zeta)= \zeta$ and $\zeta_F \circ y_F (0,\zeta)= \zeta$, for all $\zeta\in \mathbb{C}$ with $\lvert \zeta\rvert>R$, where $R>0$ is sufficiently large. Thus implementing (\ref{yH}),  (\ref{yF}), (\ref{zetaH}) and (\ref{zetaF}) together,  we get
\begin{align} \label{LH1}
 \left[\frac{D_1^H}{\zeta}+\frac{D_2^H}{\zeta^2}
 +\cdots+ \frac{D_{d-1}^H}{\zeta^{d-1}} + O\left(\frac{1}{\zeta^d}\right)\right]+\frac{L_1^H}{\zeta}  \left[1+\frac{D_1^H}{\zeta^2}+\frac{D_2^H}{\zeta^3}
 +\cdots+ \frac{D_{d-1}^H}{\zeta^{d-1}} + O\left(\frac{1}{\zeta^d}\right)\right]^{-1} \nonumber \\
 +\frac{L_2^H}{\zeta^2}  \left[1+\frac{D_1^H}{\zeta^2}+\frac{D_2^H}{\zeta^3}
 +\cdots+ \frac{D_{d-1}^H}{\zeta^{d-1}} + O\left(\frac{1}{\zeta^d}\right)\right]^{-2}+ \cdots=0
 \end{align}
 and 
 \begin{align} \label{LF1}
 \left[\frac{D_1^F}{\zeta}+\frac{D_2^F}{\zeta^2}
 +\cdots+ \frac{D_{d}^F}{\zeta^d} + O\left(\frac{1}{\zeta^d}\right)\right]+\frac{L_1^F}{\zeta}  \left[1+\frac{D_1^F}{\zeta^2}+\frac{D_2^F}{\zeta^3}
 +\cdots+ \frac{D_{d-1}^F}{\zeta^{d-1}} + O\left(\frac{1}{\zeta^d}\right)\right]^{-1} \nonumber \\
+ \frac{L_2^F}{\zeta^2}  \left[1+\frac{D_1^F}{\zeta^2}+\frac{D_2^F}{\zeta^3}
 +\cdots+ \frac{D_{d-1}^F}{\zeta^{d-1}} + O\left(\frac{1}{\zeta^d}\right)\right]^{-2}+ \cdots=0.
 \end{align}
Note that expanding (\ref{LH1}) and (\ref{LF1}),  one can express $L_k^H$ and $L_k^F$, for any $k\geq 1$, in terms of $D_1^H, D_2^H,\ldots$ and $D_1^F, D_2^F,\ldots$, respectively. In fact, using the relation between $D_k^H$ and $D_k^F$, for $1\leq k \leq (d-1)$, obtained in (\ref{DHDF}), one can establish an explicit relation between $L_k^H$ and $L_k^F$ for $1\leq k \leq (d-1)$.
 
\medskip 
\no 
{\it Claim} $\mathcal{L}$ : For $1\leq k \leq d-1$, $L_k^H=\alpha^{-(k+1)}L_k^F$. 

\medskip 
\no 
Note that implementing claim $\mathcal{L}$ to (\ref{zetaH}) and (\ref{zetaF}), we obtain
\begin{equation} \label{zeta}
\alpha\zeta_H(y)-\zeta_F(\alpha y)=O\left({1}/{y^{d}}\right).
\end{equation}
As we shall see shortly that the above relation between $\zeta_H$ and $\zeta_F$ plays the key role in  establishing the relation between $p_H$ and $p_F$. For now assuming claim $\mathcal{L}$ is true and thus assuming (\ref{zeta}) holds, let us first  determine the relation between $p_H$ and $p_F$.  We see a proof of the claim $\mathcal{L}$ in the end of the present section. 

\medskip 
\no 
{\it Relation between $p_H$ and $p_F$:}
Recall from Section 2 that for $\lvert y\rvert >R$, with $R$ large enough, one can write
\begin{equation}\label{twoterm}
\zeta_H(y)\equiv \zeta_H(0,y)=y.{\left(\frac{p_H(y)}{y^d}\right)}^{\frac{1}{d}}{\left(\frac{p_H(y_{1,H})}{y_{1,H}^d}\right)}^{\frac{1}{d^2}}\cdots
\end{equation}
and 
\begin{equation}\label{twoterm}
\zeta_F(y)\equiv \zeta_F(0,y)=y.{\left(\frac{p_F(y)}{y^d}\right)}^{\frac{1}{d}}{\left(\frac{p_F(y_{1,F})}{y_{1,F}^d}\right)}^{\frac{1}{d^2}}\cdots,
\end{equation}
where $y_{1,H}={(H(x,y))_2}=p_H(y)-\delta x$ and $y_{1,F}={(F(x,y))_2}=p_F(y)-\delta x$.
Thus  $\zeta_H(y)-O({1}/{y^{d}})$ and $\zeta_F(y)-O({1}/{y^{d}})$ are determined by
\[
 y.{\left(\frac{p_H(y)}{y^d}\right)}^{\frac{1}{d}} \text{ and } y.{\left(\frac{p_F(y)}{y^d}\right)}^{\frac{1}{d}},
\]
respectively. 
The power series expansion of the holomorphic function ${(1+z)}^{{1}/{d}}$ (principal branch) is
$$1+({1}/{d}) z+\left({({1}/{d}-1)}/{2d}\right) z^2+\cdots,$$
for  $z\in \mathbb{D}=\left\{z\in \mathbb{C}: \lvert z\rvert<1\right\}$. 
Now note that 
  \begin{eqnarray}\label{coeff pH}
   y.{\left(\frac{p_H(y)}{y^d}\right)}^{\frac{1}{d}}
  &=& y. {\left(\frac{y^d+a_{d-2}^H y^{d-2}+\cdots+a_1^H y+a_0^H}{y^d}\right)}^{\frac{1}{d}} \nonumber \\
 & =&y \left[1+ \frac{1}{d} \left( \frac{a_{d-2}^H}{y^2}+\cdots+\frac{a_0^H}{y^d}\right)+\frac{1}{2d}\left(\frac{1}{d}-1\right){\left( \frac{a_{d-2}^H}{y^2}+\cdots+\frac{a_0^H}{y^d}\right)}^2+\cdots\right]
 \end{eqnarray}
 and 
  \begin{eqnarray}\label{coeff pF}
  \alpha y.{\left(\frac{p_F(\alpha y)}{{(\alpha y)}^d}\right)}^{\frac{1}{d}}
  &=&\alpha  y. {\left(\frac{{(\alpha y)}^d+a_{d-2}^F {(\alpha y)}^{d-2}+\cdots+a_1^F (\alpha y)+a_0^F}{{(\alpha y)}^d}\right)}^{\frac{1}{d}} \nonumber \\
 & =&\alpha y \left[1+ \frac{1}{d}\left( \frac{a_{d-2}^F}{\alpha^2 y^2}+\cdots+\frac{a_0^F}{\alpha^d y^d}\right)+\frac{1}{2d}\left(\frac{1}{d}-1\right){\left( \frac{a_{d-2}^F}{\alpha^2 y^2}+\cdots+\frac{a_0^F}{\alpha^d y^d}\right)}^2+\cdots\right],
 \end{eqnarray}
 for all $y\in \mathbb{C}$  with $\lvert y \rvert$ large enough.
Since (\ref{zeta}) holds, comparing (\ref{coeff pH}) and  (\ref{coeff pF}), we get
$$
a_{d-2}^F=\alpha^2 a_{d-2}^H.
$$
Let us assume that 
$$a_{d-k}^F=\alpha^k a_{d-k}^H,$$
up to some $k$, where $2\leq k \leq (d-1)$. Now it follows from (\ref{coeff pH}) that the coefficient of ${1}/{y^{k}}$ in the expansion of $\alpha \zeta_H(y)$ is 
\begin{equation}\label{ckH}
c_{k+1}^H=\alpha\left[\left(a_{d-(k+1)}^H/{d}\right)+ G\left(a_{d-2}^H, \cdots,a_{d-k}^H \right)\right],
\end{equation}
where $G$ is defined as follows. The function $G$ is the polynomial in $(k-1)$ complex variables determined by the coefficient of ${1}/{y^{k+1}}$ of the power series expansion of 
\begin{equation}\label{def f}
{\left(\frac{p_H(y)}{y^d}\right)}^{\frac{1}{d}}-\frac{1}{d} \left( \frac{a_{d-2}^H}{y^2}+\cdots+\frac{a_0^H}{y^d}\right) -1=\frac{1}{2d}\left(\frac{1}{d}-1\right){\left( \frac{a_{d-2}^H}{ y^2}+\cdots+\frac{a_0^H}{ y^d}\right)}^2+\cdots
\end{equation}
 (see \ref{coeff pH}). In other words, if the coefficient of ${1}/{y^{k+1}}$ in (\ref{def f}) is $\sum_{i=1}^n g_i \left(\Pi_{j=2}^k {(a_{d-j}^H)}^{i(j)}\right)$ with $g_i\in \mathbb{C}$, then 
 $$G(x_1,x_2,\ldots,x_{k-1})=\sum_{i=1}^n g_i \left(\Pi_{j=2}^k {x_j}^{i(j)}\right).$$
Similarly, it follows from (\ref{coeff pF}) that the coefficient of ${1}/{y^{k}}$ in the expansion of $ \zeta_F(\alpha y)$ is 
 \begin{equation}\label{ckF}
  c_{k+1}^F=\frac{a_{d-(k+1)}^F}{\alpha^{k} d}+\alpha G\left (\frac{a_{d-2}^F}{\alpha^2}, \cdots,\frac{a_{d-k}^F}{\alpha^k} \right)=\frac{a_{d-(k+1)}^F}{\alpha^{k} d}+ \alpha G(a_{d-2}^H, \cdots,a_{d-k}^H ).
 \end{equation}
 Since $c_{k+1}^H=c_{k+1}^F$, comparing (\ref{ckH}) and (\ref{ckF}), we have 
 \[
 a_{d-(k+1)}^F=\alpha^{k+1}a_{d-(k+1)}^H.
 \]
 Therefore, the following relation between the corresponding coefficients of $p_H$ and $p_F$ holds:
 \begin{equation}\label{rel p coeff}
  a_{d-k}^F=\alpha^{k}a_{d-k}^H,
 \end{equation}
 for $2\leq k \leq d$.
 
 Now since $\beta=\alpha^{d+1}$ and (\ref{rel p coeff}) holds, we have
 \begin{eqnarray*}
 p_F(y)&=&y^d+a_{d-2}^F y^{d-2}+\cdots+a_1^F y+a_0^F\\
 &=&y^d+ \alpha^2 a_{d-2}^H y^{d-2}+\alpha^3 a_{d-3}^H y^{d-3}+\cdots+\alpha^{d-1} a_{1}^H y+\alpha^d a_0^H\\
 &=& y^d+\beta \alpha^{-(d-1)} a_{d-2}^H y^{d-2}+\beta \alpha^{-(d-2)} a_{d-3}^H y^{d-3}+\cdots+\beta \alpha^{-2} a_{1}^H y+\beta \alpha^{-1} a_0^H\\
& =&\beta \alpha^{-1} \left({(\alpha^{-1}y)}^d+ a_{d-2}^H {(\alpha^{-1}y)}^{d-2}+ a_{d-3}^H {(\alpha^{-1}y)}^{d-3}+\cdots+ a_{1}^H {(\alpha^{-1}y)}+a_0^H\right)\\
&=& (\beta \alpha^{-1}) p_H(\alpha^{-1}y).
 \end{eqnarray*}
 In other words, we obtain 
 \begin{equation} \label{rel phpf}
 \alpha p_F(\alpha y)=\beta p_H(y),
 \end{equation}
 for all $y\in \mathbb{C}$, with $\alpha^{d+1}=\beta$ and $\beta^{d-1}=1$.

\medskip 
\no 
Before giving a formal proof of the claim $\mathcal{L}$, let us establish the relation between $L_k^H$ and $L_k^F$ with bare hands when the common degree $d$ of the H\'{e}non maps $H$ and $F$ is small. To start with, first note that 
 the coefficients of ${1}/{\zeta^k}$ vanish in both (\ref{LH1}) and (\ref{LF1}), for all $k\geq 1$.

\medskip 
 \no 
 $\bullet$ Let $d=2$. Thus by (\ref{DHDF}), we have $D_1^H=\alpha^{-2}D_1^F $. The coefficients of ${1}/{\zeta}$ in (\ref{LH1}) and (\ref{LF1}) are $D_1^H+ L_1^H$ and $D_1^F+ L_1^F$, respectively and both vanish. Thus
 \[
  L_1^H=-D_1^H=-\alpha^{-2} D_1^F=\alpha^{-2} L_1^F.
 \]

 \medskip 
 \no 
 $\bullet$ Let $d=3$. By (\ref{DHDF}), we have $D_i^H=\alpha^{-(i+1)}D_i^F $, for $i=1,2$.
 Since the coefficients of ${1}/{\zeta}$ vanish in (\ref{LH1}) and (\ref{LF1}), as before we can show that $L_1^H=\alpha^{-2} L_1^F$.  Using the fact that coefficients of ${1}/{\zeta^2}$, which are 
 $$D_2^H+L_1^H. 0+L_2^H \text{ and } D_2^F+L_1^F. 0+L_2^F, $$  
 vanish,  we have 
 \[
  L_2^H=-D_2^H=-\alpha^{-3} D_2^F= \alpha^{-3}   L_2^F.
 \]
 
 \medskip 
 \no 
 $\bullet$ Let $d=4$. By (\ref{DHDF}), we have $D_i^H=\alpha^{-(i+1)}D_i^F$, for $1\leq i \leq 3$. Now since the coefficients ${1}/{\zeta^i}$ vanish in (\ref{LH1}) and (\ref{LF1}), as before we can show that $L_i^H=\alpha^{-(i+1)}  L_i^F$, for $1\leq i \leq 2$. Now the coefficients of ${1}/{\zeta^3}$ in  (\ref{LH1}) and (\ref{LF1}) are 
 \[
D_3^H+L_1^H. \left(C_{1}D_1^H \right)+L_2^H. 0+ L_3^H \text{ and } D_3^F+L_1^F. \left(C_{1}D_1^F \right)+L_2^F. 0+ L_3^F,
\]
respectively for some $C_{1}\in \mathbb{C}$ and they vanish. Therefore, 
 \begin{eqnarray*}
&& L_3^H =-D_3^H-L_1^H. \left(C_{1}D_1^H\right), 
\end{eqnarray*}
which implies 
\[
L_3^H =-\alpha^{-4} D_3^F-\alpha^{-4}  L_1^F . \left(C_{1} D_1^F\right)= \alpha^{-4} L_3^F.
\]
 
 \medskip 
 \no 
 $\bullet$ Let $d=5$. Thus we have $D_i^H=\alpha^{-(i+1)}D_i^F$, for $1\leq i \leq 4$. Using the same arguments as before we can show $L_i^H=\alpha^{-(i+1)}  L_i^F$, for $1\leq i \leq 3$. Now the coefficients of ${1}/{\zeta^4}$ in  (\ref{LH1}) and (\ref{LF1})  are 
 \[
 D_4^H+ L_1^H. \left(C_{2} D_2^H\right)+L_2^H. \left(C_{3} D_1^H\right)+ L_3^H. 0
 + L_4^H
 \]
 and 
 \[
 D_4^F+ L_1^F. \left(C_{2} D_2^F\right)+L_2^F. \left(C_{3} D_1^F\right)+ L_3^F. 0
 + L_4^F,
 \]
 for some $C_2, C_3 \in \mathbb{C}$ and since they vanish, a simple calculation as above gives that 
 \[
 L_4^H= \alpha^{-5} L_4^F.
 \]
 
 \medskip 
 \no 
$\bullet$ Let $d=6$. Thus we have $D_i^H=\alpha^{-(i+1)}D_i^F$, for $1\leq i \leq 5$. As before we can show $L_i^H=\alpha^{-(i+1)}  L_i^F$, for $1\leq i \leq 4$. The coefficients of ${1}/{\zeta^5}$ in  (\ref{LH1}) and (\ref{LF1})  are $M_5^X$. Here for $X=H, F$,
 \begin{align*}
M_5^X= D_5^X + L_1^X. \left (C_4 D_3^X+C_5{(D_1^X)}^2\right)+L_2^X. \left(C_6 D_2^X\right)+ L_3^X . (C_7 D_1^X)+L_4^X .0+ L_5^X,
 \end{align*}
 where $C_4, C_5$ and so on are from $\mathbb{C}$. Since $M_5^H$ and $M_5^F$ vanish, 
 calculations as above give us 
 \[
 L_5^H= \alpha^{-6} L_5^F.
 \]
 
 \medskip 
 \no 
$\bullet$ Let $d=7$. Thus $D_i^H=\alpha^{-(i+1)}D_i^F$, for $1\leq i \leq 6$. Hence we can show $L_i^H=\alpha^{-(i+1)}  L_i^F$, for $1\leq i \leq 5$ as before.  The coefficients of ${1}/{\zeta^6}$ in  (\ref{LH1}) and (\ref{LF1})  are $M_6^X$. Here for $X=H, F$, 
 \begin{align*}
 M_6^X=D_6^X + L_1^X. \left (C_8 D_4^X+C_9 D_1^X D_2^X\right)+L_2^X.\left(C_{10} D_3^X+C_{11} {(D_1^X)}^2\right)+ L_3^X. (C_{12}D_2^X)\\
 +L_4^X. (C_{13} D_1^X)
 + L_5^X . 0+L_6^X,
 \end{align*}
 where $C_8, C_9$ and so on are from $\mathbb{C}$.
 Since $M_6^H$ and $M_6^F$ vanish,  we get
 \[
 L_6^H= \alpha^{-7} L_6^F.
 \]
 
 \medskip 
 \no 
 $\bullet$ Let $d=8$. So $D_i^H=\alpha^{-(i+1)}D_i^F$, for $1\leq i \leq 7$. Hence we can show $L_i^H=\alpha^{-(i+1)}  L_i^F$, for $1\leq i \leq 6$ just as before. 
 Now the the coefficients of ${1}/{\zeta^7}$ in (\ref{LH1}) and (\ref{LF1})  are $M_7^X$. Here for $X=H, F$, 
 \begin{eqnarray*}
M_7^X&=& D_7^X + L_1^X.\left(C_{14} D_5^X+C_{15} D_1^X D_3^X+ C_{16}{(D_2^X)}^2+ C_{17} {(D_1^X)}^3 \right)\\
&+& L_2^X.\left (C_{18} D_4^X+C_{19} D_1^X D_2^X\right)
 + L_3^X.\left(C_{20} D_3^X +C_{21} {(D_1^X)}^2\right)\\
 &+&L_4^X .(C_{22} D_2^X)+ L_5^X .(C_{23} D_1^X)+ L_6^X.0+L_7^X,
 \end{eqnarray*}
 where $C_{14}, C_{15}$ and so on are from $\mathbb{C}$.
 A simple calculation as above gives us 
 \[
 L_7^H= \alpha^{-8} L_7^F.
 \]
 As promised earlier, now we see a proof of the claim $\mathcal{L}$. 
\subsection*{Proof of Claim $\mathcal{L}$}
First we make a few observations, which follow immediately by chasing the expression of the coefficients of ${1}/{\zeta^s}$ in (\ref{LH1}) and (\ref{LF1}), for $s\geq 1$ and then using the fact that all of them vanish. 

\medskip 
\no 
{\textbf{(O1)}}
For any $s\geq 1$, the coefficients of ${1}/{\zeta^s}$ in (\ref{LH1}) and (\ref{LF1}) vanish. Further,  they are of the form $D_s^H+R_s^H+ L_s^H$ and $D_s^F+R_s^F+ L_s^F$, respectively, where $R_s^H$ and $R_s^F$ are linear combinations of products of powers of $D_i^H$'s, $L_j^H$'s and $D_i^F$'s, $L_j^F$'s, respectively, with $1\leq i,j<s$.  

\medskip 
\no 
 {\textbf{(O2)}}
 For $s \geq 1$, one can write
 \[
R_s^H= \sum_{i=1}^{s-1} L_i^H R_{i,s}^H \text{ and } R_s^F=\sum_{i=1}^{s-1} L_i^F R_{i,s}^F.
 \]
Let $2\leq i \leq s-1$. It follows from (\ref{LH1}) that $R_{i,s}^H$ is the coefficient of ${1}/{\zeta^{s-i}}$ of the power series expansion of
  \[
  \left(1+\frac{D_1^H}{\zeta^2}+\frac{D_2^H}{\zeta^3}
 +\cdots+ \frac{D_{d-1}^H}{\zeta^{d-1}} + O\left(\frac{1}{\zeta^d}\right)\right)^{-i},
  \]
 and  similarly, 
  $R_{i-1,s-1}^H$ is the coefficient of ${1}/{\zeta^{s-i}}$ of the power series expansion of
  \[
  \left(1+\frac{D_1^H}{\zeta^2}+\frac{D_2^H}{\zeta^3}
 +\cdots+ \frac{D_{d-1}^H}{\zeta^{d-1}}+ O\left(\frac{1}{\zeta^d}\right)\right)^{-(i-1)}.
 \]
Therefore, for $2\leq i \leq s-1$, if
 \begin{equation} \label{Ris}
 R_{i,s}^H=C(i,s,1) D^H(i,s,1) +C(i,s,2) D^H(i,s,2)+\cdots +C(i,s,i_s) D^H(i,s,i_s)
 \end{equation}
 and 
 \begin{equation} 
  R_{i,s}^F=C(i,s,1) D^F(i,s,1) +C(i,s,2) D^F(i,s,2)+\cdots +C(i,s,i_s) D^F(i,s,i_s),
  \end{equation}
  where for $1\leq j \leq i_s$, $D^H(i,s,j)$ and $D^F(i,s,j)$ are products of powers of $D_l^H$'s and $D_l^F$'s ($1\leq l <s$), respectively,  with $ C(i,s,j)\in \mathbb{C}$, then 
\begin{equation}\label{Riss}
R_{i-1,s-1}^H= \tilde{C}(i,s,1)D^H(i,s,1) + \tilde{C}(i,s,2)D^H(i,s,2) + \cdots +\tilde{C}{(i,s,i_s)} D^H(i,s,i_s) 
 \end{equation}
 and 
 \begin{equation}
R_{i-1,s-1}^F= \tilde{C}(i,s,1)D^F(i,s,1) + \tilde{C}(i,s,2)D^F(i,s,2) + \cdots +\tilde{C}{(i,s,i_s)} D^F(i,s,i_s),
 \end{equation}
 with $\tilde{C}(i,s,j)\in \mathbb{C}$, for $1\leq j \leq i_s$. 
 
 \medskip 
 \no 
{\textbf{(O3)}}
Let $ s \geq 1$ and $i=1$. Then 
\begin{equation} 
 R_{1,s}^H=C(1,s,1) D^H(i,s,1) +C(1,s,2) D^H(i,s,2)+\cdots +C(1,s,1_s) D^H(1,s,1_s),
 \end{equation}
where for $1\leq j \leq 1_s$, 
\[
D^H(1,s,j)=D_{{s_j}_1}^H D_{{s_j}_2}^H \cdots D_{{s_j}_{m(s_j)}}^H,
\]
with ${s_j}_1,{s_j}_2,\ldots, {s_j}_{m(s_j)} \in \{1,2,\ldots,s-1\}$ (${s_j}_1,{s_j}_2,\ldots, {s_j}_{m(s_j)}$ are not possibly all distinct), i.e., $D_{{s_j}_1}^H, D_{{s_j}_2}^H, \ldots, D_{{s_j}_{m(s_j)}}^H \in \left\{D_1^H, \ldots, D_{s-1}^H\right\}$. Now $R_{1,s}^H$ is the coefficient of ${1}/{\zeta^{s-1}}$ of the power series expansion of
  \[
  \left(1+\frac{D_1^H}{\zeta^2}+\frac{D_2^H}{\zeta^3}
 +\cdots+ \frac{D_{d-1}^H}{\zeta^{d-1}}+ O\left(\frac{1}{\zeta^d}\right)\right)^{-1}.
 \]
 Similarly, $R_{1,s+1}^H$ is the coefficient of ${1}/{\zeta^{s}}$ of the power series expansion of
  \[
  \left(1+\frac{D_1^H}{\zeta^2}+\frac{D_2^H}{\zeta^3}
 +\cdots+ \frac{D_{d-1}^H}{\zeta^{d-1}}+ O\left(\frac{1}{\zeta^d}\right)\right)^{-1}.
 \]
Therefore, if 
  \begin{equation} \label{R1sH}
  R_{1,s}^H= \sum_{j=1}^{1_s} C(1,s,j) D_{{s_j}_1}^H D_{{s_j}_2}^H \cdots D_{{s_j}_{m(s_j)}}^H,
\end{equation}
then 
\begin{equation}\label{R1s1H}
 \begin{aligned}
 R_{1,s+1}^H =&\sum_{j=1}^{1_s} \biggl[C^{(1)}{(1,s,j)} D_{{s_j}_1+1}^H D_{{s_j}_2} \cdots D_{{s_j}_{m(s_j)}}^H +C^{(2)}{(1,s,j)}D_{{s_j}_1}^H D_{{s_j}_2+1}^H \cdots D_{{s_j}_{m(s_j)}}^H\\
&  +\cdots +C^{(m(s_j))}(1,s,j) D_{{s_j}_1}^H.D_{{s_j}_2}\cdots D_{{s_j}_{m(s_j)}+1}^H\biggr]+C_s {(D_1^H)}^{\frac{s}{2}} , 
   \end{aligned}
   \end{equation}
   with $C_s, C^{(k)}(1,s,j)\in \mathbb{C}$, for $1\leq j \leq 1_s$ and $1\leq k \leq m(s_j)$. 
  Further,  $C_s=0$, if $s$ is not divisible by $2$ and $C_s\neq 0$, if $s$ is divisible by $2$. The same relation holds between $R_{1,s}^F$ and $R_{1,s+1}^F$.

\medskip 
 \no 
 
 Let $d\geq 4$. Note that we have already established claim $\mathcal{L}$ for $d=2,3$ just before starting the present proof. Let $1\leq k \leq d-2$. Then we prove that if for $1\leq s \leq k$,
\begin{itemize}
\item[$(\bullet)$]
$L_{s}^H=\alpha^{-(s+1)}L_s^F$, and

\item[$(\bullet \bullet)$] 
for each fixed pair $(i,s)$ satisfying $2 \leq i\leq s-1$,  
$
D^H(i,s,j)= \alpha^{-(s-i)}D^F(i,s,j),
$
for $1\leq j \leq i_s$ and $R_{1,s}^H=\alpha^{-(s-1)}R_{1,s}^F$ (which in turn gives $R_{i,s}^H=\alpha^{-(s-i)}R_{i,s}^F$, for  $1\leq i \leq s-1$),
\end{itemize}
 then $L_{k+1}^H=\alpha^{-(k+2)}L_{k+1}^F$.
  
Now observe that performing the same calculations, which we did immediately before starting this proof, ($\bullet$) and ($\bullet \bullet$) hold for a first few $k$'s with $1\leq k \leq d-2$.  The coefficients of ${1}/{\zeta^{k+1}}$ in (\ref{LH1}) and (\ref{LF1}) are
 \begin{equation}\label{vanishH}
 D_{k+1}^H + L_1^H  R_{1,k+1}^H + L_2^H R_{2,k+1}^H + \cdots + L_{k}^H R_{k,k+1}^H+  L_{k+1}^H,
 \end{equation}
 and 
  \begin{equation}\label{vanishF}
 D_{k+1}^F + L_1^F  R_{1,k+1}^F + L_2^F R_{2,k+1}^F + \cdots + L_{k}^F R_{k,k+1}^F+  L_{k+1}^F,
 \end{equation}
respectively (note that both vanish). Let $2\leq i \leq k$. Then by (\ref{Ris}), it follows that
\begin{equation} 
 R_{i,k+1}^H=C(i,k+1,1) D^H(i,k+1,1) +\cdots +C(i,k+1,i_{k+1}) D^H(i,k+1,i_{k+1}), 
 \end{equation}
 and thus by (\ref{Riss}) 
 \begin{equation}
R_{i-1,k}^H= \tilde{C}(i,k+1,1)D^H(i,k+1,1) + \cdots +\tilde{C}{(i,k+1,i_{k+1})} D^H(i,k+1,i_{k+1}).
 \end{equation}
 Therefore, clearly for $1\leq j \leq i_{k+1}$, $D^H(i,k+1,j)=D^H(i-1,k,r_j)$, for some $1\leq r_j \leq {(i-1)}_{k}$. Similar conclusion holds for $D^F(i,k
 +1,j)$, i.e., for $1\leq j \leq i_{k+1}$, $D^F(i,k +1,j)=D^F(i-1,k,r_j)$.  Therefore, using ($\bullet \bullet$) for $s=k$, we have 
 $$
 D^H(i,k+1,j)=D^H(i-1,k,r_j)=\alpha^{-(k-i+1)}D^F(i-1,k,r_j)=\alpha^{-(k-i+1)} D^F(i,k+1,j),
 $$
 for $2\leq i \leq k$ and for $1\leq j \leq i_{k+1}$. 
 Therefore, 
\begin{equation}\label{RikF}
R_{i,k+1}^H= \alpha^{-(k+1-i)}R_{i,k+1}^F,
\end{equation}
for $2\leq i \leq k$. By  ($\bullet \bullet$), we have  $R_{1,k}^H = \alpha^{-(k-1)}R_{1,k}^F.$
 Thus comparing (\ref{R1sH}) and (\ref{R1s1H}) along with  using  the fact that $D_k^H=\alpha^{-(k+1)}D_k^F$, for $1\leq k \leq d-1$ (see (\ref{DHDF})), we obtain 
 \begin{align}\label{Ronek}
  R_{1,k+1}^H =\alpha^{-k} R_{1,k+1}^F.
  \end{align}
  Now as mentioned before  (\ref{vanishH}) and (\ref{vanishF}) both are equal to zero. Therefore using (\ref{RikF}), (\ref{Ronek}) and (\ref{DHDF}), we get 
  \begin{eqnarray*}
  L_{k+1}^H&=& -\left(D_{k+1}^H + L_1^H  R_{1,k+1}^H + L_2^H R_{2,k+1}^H + \cdots + L_{k}^H R_{k,k+1}^H\right)\\
  &=&-\alpha^{-(k+2)}\left(D_{k+1}^F + L_1^F  R_{1,k+1}^F + L_2^F R_{2,k+1}^F + \cdots + L_{k}^F R_{k,k+1}^F\right)\\
  &=&\alpha^{-(k+2)} L_{k+1}^F.
\end{eqnarray*} 
 Thus the claim follows.

\section{H\'{e}non maps with different Jacobians}
 \no 
 Let $H(x,y)=\left(y, p_H(y)-\delta_H x\right) $ and $F(x,y)=(y, p_F(y)-\delta_F x)$ be
  such that $\delta_H\neq \delta_F$.  As before, we assume that $\deg H=\deg F=d\geq 2$. It follows from 
 \cite[Thm.\ 2.7]{BRT} that if $I_H^+$ and $I_F^+$ are biholomorphic, then 
 \begin{equation} \label{degree}
 \delta_H^{d-1}=\delta_F^{d-1}=\delta.
 \end{equation}
 Now instead of working with $H$ and $F$, we work with $\tilde{H}=H^{d-1}$ and $\tilde{F}=F^{d-1}$. Note that since (\ref{degree}) holds the Jacobians of $\tilde{H}$ and $\tilde{F}$ are the same. Further, $I_{H^{d-1}}^+=I_{F^{d-1}}^+$.
 
 If $\tilde{H}: \mathbb{C}\times (\mathbb{C}\setminus \bar{\mathbb{D}}) \rightarrow \mathbb{C}\times (\mathbb{C}\setminus \bar{\mathbb{D}})$ is  the lift of $H:I_H^+ \rightarrow I_H^+$, then clearly ${\tilde{H}}^{d-1}: \mathbb{C}\times (\mathbb{C}\setminus \bar{\mathbb{D}}) \rightarrow \mathbb{C}\times (\mathbb{C}\setminus \bar{\mathbb{D}})$ is the lift of $H^{d-1}:I_H^+ \rightarrow I_H^+$. It follows from (\ref{H tilde}) that
 \[
 \tilde{H}(z,\zeta)=\left({(\delta_H}/{d}) z+Q_H(\zeta), \zeta^d \right).
 \]
 A simple calculation gives
 \[
{ \tilde{H}}^{d-1}(z,\zeta)=\left({\left({\delta_H}/{d}\right)}^{d-1}z+\tilde{Q}_H(\zeta), \zeta^{d^{d-1}}\right),
 \]
 where
$$
\tilde{Q}_H(\zeta)= {\left({\delta_H}/{d}\right)}^{d-2} Q_H(\zeta)+{\left({\delta_H}/{d}\right)}^{d-3} Q_H(\zeta^d)+\cdots +Q_H\left(\zeta^{d^{d-2}}\right).
$$
Note that  $\deg \tilde{H}=\deg \tilde{F}=\tilde{d}=d^{d-1}$ and $\det J_H=\det J_H=\delta$, where $J_H$ and $J_F$ are the Jacobian matrices of $H$ and $F$, respectively. Also it is easy to see that $\mathbb{Z}[{1}/{d^n}]=\mathbb{Z}[{1}/{d}]$ for all $n\geq 1$, and in particular, $\mathbb{Z}[{1}/{\tilde{d}}]=\mathbb{Z}[{1}/{d^{d-1}}]=\mathbb{Z}[{1}/{d}]$. As in (\ref{form aut}), one can show that 
for each element $\left[{k}/{\tilde{d}^n}\right] \in \mathbb{Z}[{1}/{\tilde{d}}]/\mathbb{Z}$, there exists a unique deck  transformation $\gamma_{{k}/{\tilde{d}^n}}$ of $\mathbb{C}\times (\mathbb{C}\setminus\bar{\mathbb{D}})$ such that 
\begin{equation}\label{form aut tilde}
\gamma_{{k}/{\tilde{d}^n}}
\begin{bmatrix}
z\\ \zeta
\end{bmatrix}
=\begin{bmatrix} 
 z+ \frac{\tilde{d}}{\delta} \sum_{l=0}^{n-1} {\left(\frac{\tilde{d}}{\delta}\right)}^l\left(\tilde{Q}_H(\zeta^{{\tilde{d}}^l})-\tilde{Q}_H\left( {\left(e^{\frac{2 k\pi i}{\tilde{d}^n}}\zeta)\right)}^{\tilde{d}^l} \right)\right)\\
 e^{\frac{2 k\pi i}{\tilde{d}^n}}\zeta
 \end{bmatrix},
\end{equation}
for each $n\geq 0$, $k\geq 1$ and for all $(z,\zeta)\in \mathbb{C}\times (\mathbb{C}\setminus\bar{\mathbb{D}})$. Thus, if $\tilde{p}=(z,\zeta)\in \mathbb{C}\times \left(\mathbb{C}\setminus \bar{\mathbb{D}}\right)$ is a point in the fiber of $p\in I_H^+=I_{\tilde{H}}^+$, then the other points in the fiber are precisely $\gamma_{{k}/{\tilde{d}^n}}(z,\zeta)$ with $n\geq 0$ and $k\geq 1$.

Note that the main hindrance to run a similar set of calculations as in Section 3 for any arbitrary pair of H\'{e}non maps $H$ and $F$ of the same degree is that the Jacobians of $H$ and $F$ are possibly different. But if we work with $H^{d-1}$ and $F^{d-1}$, there is no such issues.  Since (\ref{form aut tilde}) holds, a moment's thought assures that exactly similar  calculations as in Section 3 run smoothly for the maps $H^{d-1}$ and $F^{d-1}$.  Therefore, if $a: I_{\tilde{H}}^+ \rightarrow I_{\tilde{H}}^+$ is a biholomorphism and if $A: \mathbb{C}\times (\mathbb{C}\setminus\bar{\mathbb{D}}) \rightarrow \mathbb{C}\times (\mathbb{C}\setminus\bar{\mathbb{D}})$ is a lift, i.e, $A$ is of the form $(z,\zeta)\mapsto (\beta(z)\zeta +\gamma(\zeta), \alpha \zeta)$, then as in Section 3, we can show 

\begin{itemize}
\item 
$\beta(\zeta) \equiv \beta$ in $\mathbb{C}$;

\item 
$\left(\alpha^{d^{d-2}}\right)^{(d+1)d^{n-1}}\rightarrow \beta$ as $n\rightarrow \infty$ and consequently, $\alpha^{d^{d-2}(d+1)}=\beta$ with $\beta^{d-1}=1$.
\end{itemize} 
Thus as in (\ref{ReQHQF}) we obtain 
\begin{equation}\label{Re tild QHQF}
\beta \frac{\tilde{d}}{\delta} \left[\tilde{Q}_H(\zeta)-\tilde{Q}_H\left(e^{\frac{2\pi i}{\tilde{d}}} \zeta\right)\right]=\frac{\tilde{d}}{\delta} \left[\tilde{Q}_F(\alpha \zeta)-\tilde{Q}_F\left(e^{\frac{2\pi i}{\tilde{d}}} \alpha \zeta\right)\right],
 \end{equation}
for all $\zeta \in \mathbb{C}\setminus \bar{\mathbb{D}}$, with $\alpha^{d^{d-2}(d+1)}=\beta$ and $\beta^{d-1}=1$. Expanding (\ref{Re tild QHQF}), we get 
\begin{align}\label{QHQFtilde}
\beta \frac{\delta_H^{d-2}}{{{d}}^{d-2}}
\left[{Q}_H(\zeta)-{Q}_H  \left(e^{\frac{2 \pi i}{{\tilde{d}}}} \zeta \right)  \right]+\beta \frac{\delta_H^{d-3}}{{{d}}^{d-3}}\left[{Q}_H(\zeta^d)-{Q}_H\left( { \left(e^{\frac{2 \pi i}{{\tilde{d}}}} \zeta \right) }^d\right) \right]+ \nonumber \\
\cdots +\beta \left[{Q}_H\left(\zeta^{d^{d-2}}\right)-{Q}_H \left( {\left(e^{\frac{2 \pi i}{\tilde{d}}} \zeta \right)}^{d^{d-2}} \right) \right] \nonumber \\
= \frac{\delta_F^{d-2}}{{{d}}^{d-2}}
\left[{Q}_F(\alpha\zeta)-{Q}_F  \left(e^{\frac{2 \pi i}{{\tilde{d}}}}\alpha \zeta \right)  \right]+\frac{\delta_F^{d-3}}{{{d}}^{d-3}}\left[{Q}_F\left({(\alpha\zeta)}^d\right)-{Q}_F\left( { \left(e^{\frac{2 \pi i}{{\tilde{d}}}} \alpha \zeta \right) }^d \right)\right]+\nonumber \\
\cdots +\left[{Q}_F\left({(\alpha\zeta)}^{d^{d-2}}\right)-{Q}_F\left( {\left(e^{\frac{2 \pi i}{\tilde{d}}} \alpha \zeta \right)}^{d^{d-2}} \right) \right].
\end{align}
While comparing the coefficients of the polynomials appearing in the right hand side and in the left hand side of  (\ref{QHQFtilde}), note that if $\zeta^{d^r l}=\zeta^{d^s m}$ for $0\leq r,s \leq d-2$ and $1\leq l,m \leq d+1$ with $r\neq s$ (without loss of generality $r>s$, say), then $m=d^{r-s} l$. Since $d\geq 2$ and $0\leq r,s \leq d-2$ and $1\leq l,m \leq d+1$, clearly $r=s+1$ and $l=1$.  Thus since next to the highest coefficients of both $Q_H$ and $Q_F$ vanish ($Q_H$ and $Q_F$ are of the form (\ref{QH}) and (\ref{QF}), respectively), we obtain 
  \begin{equation}\label{QHQF comp}
  \beta\left[{Q}_H\left(\zeta^{d^{d-2}}\right)-{Q}_H\left( {\left(e^{\frac{2 \pi i}{\tilde{d}}} \zeta \right)}^{d^{d-2}} \right)\right]=
  {Q}_F\left({(\alpha\zeta)}^{d^{d-2}}\right)-{Q}_F\left( {\left(e^{\frac{2 \pi i}{\tilde{d}}} \alpha \zeta \right)}^{d^{d-2}} \right).
  \end{equation}
  Comparing both sides of (\ref{QHQF comp}), we get
   \begin{equation*}
 \beta A_{d-k}^H=\tilde{\alpha}^{d-k} A_{d-k}^F,
 \end{equation*}
 for $1\leq k \leq (d-1)$, where $\tilde{\alpha}=\alpha^{d^{d-2}}$. Equivalently, 
 we get 
 \begin{equation} \label{coeff Q mod}
 A_{d-k}^H=\tilde{\alpha}^{-(k+1)}A_{d-k}^F
 \end{equation}
  for $1\leq k \leq (d-1)$, with ${\tilde{\alpha}}^{d+1}=1$. Note that (\ref{coeff Q mod}) is an analogue of (\ref{coeffQ}) and therefore, using the same set of arguments as in Section 4, we get 
\begin{equation}\label{rel tild phpf}
\beta p_H(y)=\tilde{\alpha}p_F(\tilde{\alpha}y),
\end{equation}
for all $y\in \mathbb{C}$, with ${\tilde{\alpha}}^{d+1}=\beta$ and $\beta^{d-1}=1$.

\section{Proofs of Theorem \ref{main} and Theorem \ref{main'}}
\no 
{\it Proof of Theorem \ref{main}:}
It follows from (\ref{rel phpf}), (\ref{rel tild phpf}) and (\ref{degree}) that 
\begin{equation*}
\alpha p_F(\alpha y)=\beta p_H(y),
\end{equation*}
with $\alpha^{d+1}=1$ and 
\[
\delta_F=\gamma \delta_H,
\]
with $\alpha^{d^2-1}=1$ and $\beta^{d-1}=\gamma^{d-1}=1$. Thus if 
\begin{equation}\label{AB}
A(x,y)=(\alpha x, \beta \alpha^{-1} y)
\text{ and } B(x,y)=(\gamma \alpha \beta^{-1}x, \alpha^{-1}y),
\end{equation}
 then a simple calculation gives
\[
F\equiv A\circ H \circ B
\]
in $\mathbb{C}^2$.  Now note that if we write $A=(A_1,A_2)$ and $B=(B_1,B_2)$, then
\[
{\left(A_1(x,y)\right)}^{d-1}={(B_1(x,y))}^{d-1} \text{ and }{\left(A_2(x,y)\right)}^{d-1}={(B_2(x,y))}^{d-1}.
\]
Thus, 
\begin{equation}\label{ABdelta}
A_1(x,y)=\mu_1 B_1(x,y) \text{ and }A_2(x,y)=\mu_2 B_2(x,y),
\end{equation}
for some $\mu_1, \mu_2 \in \mathbb{C}$, with $\mu_1^{d-1}=\mu_2^{d-1}=1$. Now comparing (\ref{AB}) and (\ref{ABdelta}), we obtain $\mu_1=\gamma^{-1}\beta$ and $\mu_2=\beta$. Therefore, 
\begin{equation*} 
A(x,y)=L_\mu \circ B(x,y),
\end{equation*}
for all $(x,y)\in \mathbb{C}^2$, where $L_\mu(x,y)=(\mu_1 x, \mu_2 y)$. Thus, 
\begin{equation*}
F\equiv L_\mu \circ B \circ H \circ B.
\end{equation*}

\medskip 
\no 
{\it Proof of Theorem \ref{main'}:} Implementing the idea discussed just after stating the Theorem \ref{main} in the Introduction,  proof of Theorem \ref{main'} follows immediately once we prove Theorem \ref{main}.

\medskip 
\no 
It would be interesting to investigate the converse of Theorem \ref{main'}.

\medskip 
\no 
\it{Question:} If $H$ and $F$ are  two H\'{e}non maps such that 
\[
F=A_1\circ H \circ A_2
\] 
in $\mathbb{C}^2$, where $A_1$ and $A_2$ are affine automorphisms in $\mathbb{C}^2$, then how are the escaping sets of $H$ and $F$ related?


 \bibliographystyle{amsplain}

\end{document}